\newtheorem{thm}{Theorem}
\newtheorem{lem}[thm]{Lemma}
\theoremstyle{definition}
\providecommand{\abs}[1]{\lvert#1\rvert}
\providecommand{\Abs}[1]{\Bigl\lvert#1\Bigr\rvert}
\begin{document}

\title[Urns with random barriers]{Asymptotics for randomly reinforced urns with random barriers}

\author{Patrizia Berti}
\address{Patrizia Berti, Dipartimento di Scienze Fisiche, Informatiche e Matematiche, Universita' di Modena e Reggio-Emilia, via Campi 213/B, 41100 Modena, Italy}
\email{patrizia.berti@unimore.it}

\author{Irene Crimaldi}
\address{Irene Crimaldi, IMT Institute for Advanced Studies, Piazza San Ponziano 6, 55100 Lucca, Italy}
\email{irene.crimaldi@imtlucca.it}

\author{Luca Pratelli}
\address{Luca Pratelli, Accademia Navale, viale Italia 72, 57100 Livorno,
Italy} \email{pratel@mail.dm.unipi.it}

\author{Pietro Rigo}
\address{Pietro Rigo (corresponding author), Dipartimento di Matematica ``F. Casorati'', Universita' di Pavia, via Ferrata 1, 27100 Pavia, Italy}
\email{pietro.rigo@unipv.it}

\keywords{Bayesian nonparametrics -- Central limit theorem --
Clinical trial -- Random probability measure -- Stable convergence -- Urn model}

\subjclass[2010]{60B10, 60F05, 60G57, 62F15}

\date{\today}

\begin{abstract}
An urn contains black and red balls. Let $Z_n$ be the proportion of black balls at time $n$ and $0\le L<U\le 1$ random barriers. At each time $n$, a ball $b_n$ is drawn. If $b_n$ is black and $Z_{n-1}<U$, then $b_n$ is replaced together with a random number $B_n$ of black balls. If $b_n$ is red and $Z_{n-1}>L$, then $b_n$ is replaced together with a random number $R_n$ of red balls. Otherwise, no additional balls are added, and $b_n$ alone is replaced. In this paper, we assume $R_n=B_n$. Then, under mild conditions, it is shown that $Z_n\overset{a.s.}\longrightarrow Z$ for some random variable $Z$, and
\begin{gather*}
D_n:=\sqrt{n}\,(Z_n-Z)\longrightarrow\mathcal{N}(0,\sigma^2)\quad\text{conditionally a.s.}
\end{gather*}
where $\sigma^2$ is a certain random variance. Almost sure conditional convergence means that
\begin{gather*}
P\bigl(D_n\in\cdot\mid\mathcal{G}_n\bigr)\overset{weakly}\longrightarrow\mathcal{N}(0,\,\sigma^2)\quad\text{a.s.}
\end{gather*}
where $P\bigl(D_n\in\cdot\mid\mathcal{G}_n\bigr)$ is a regular version of the conditional distribution of $D_n$ given the past $\mathcal{G}_n$. Thus, in particular, one obtains $D_n\longrightarrow\mathcal{N}(0,\sigma^2)$ stably. It is also shown that $L<Z<U$ a.s. and $Z$ has non-atomic distribution.
\end{abstract}

\maketitle

\section{Introduction}

In recent times, there is a growing interest on randomly reinforced urns. A meaningful version of the latter, introduced in \cite{AGP} and supported by real applications, is the following.

\subsection{Framework}\label{fr6w} An urn contains $b>0$ black balls and $r>0$ red balls. At each time, a ball is drawn and then replaced, possibly together with a random number of balls of the same color. Precisely, for each $n\ge 1$, let $b_n$ denote the ball drawn at time $n$ and $Z_n$ the proportion of black balls in the urn at time $n$. Then,

\vspace{0.2cm}

\begin{itemize}

\item If $b_n$ is black and $Z_{n-1}<U$, where $U$ is a random barrier, $b_n$ is replaced together with a random number $B_n\ge 0$ of black balls;

\item If $b_n$ is red and $Z_{n-1}>L$, where $L<U$ is another random barrier, $b_n$ is replaced together with a random number $R_n\ge 0$ of red balls;

\item Otherwise, $b_n$ is replaced without additional balls, so that the composition of the urn does not change.

\end{itemize}

\vspace{0.2cm}

To model such urns, we fix a probability space $(\Omega,\mathcal{A},P)$ supporting the random variables $(L,U,X_n,B_n,R_n:n\ge 1)$ such that
\begin{gather*}
0\le L<U\le 1;\quad X_n\in\{0,1\};\quad 0\le B_n,\,R_n\le c\quad\text{for some constant }c.
\end{gather*}
We let
\begin{gather*}
\mathcal{G}_0=\sigma(L,U),\quad\mathcal{G}_n=\sigma\bigl(L,U,X_1,B_1,R_1,\ldots,X_n,B_n,R_n\bigr),\quad Z_0=b/(b+r),
\vspace{0.3cm}\\Z_n=\frac{b+\sum_{i=1}^nX_i\,B_i\,I_{\{Z_{i-1}<U\}}}{b+r+\sum_{i=1}^n\bigl[X_i\,B_i\,I_{\{Z_{i-1}<U\}}+(1-X_i)\,R_i\,I_{\{Z_{i-1}>L\}}\bigr]}.
\end{gather*}
Further, we assume
\begin{gather*}
E(X_{n+1}\mid\mathcal{G}_n)=Z_n\text{ a.s.}\quad\text{and}
\\(B_n,\,R_n)\quad\text{independent of}\quad\sigma\bigl(\mathcal{G}_{n-1},\,X_n).
\end{gather*}

Clearly, $X_n$ should be regarded as the indicator of the event $\{$black ball at time $n\}$ and $Z_n$ as the proportion of black balls in the urn at time $n$.

\subsection{State of the art} Though the literature on randomly reinforced urns is quite huge, random barriers are not so popular. In other terms, the case $L=0$ and $U=1$ is widely investigated (see e.g. \cite{AMS}, \cite{BH}-\cite{BCPR}, \cite{CPS}, \cite{LP}-\cite{ZHCC} and references therein) but
\begin{equation*}
P\bigl(\{L>0\}\cup\{U<1\}\bigr)>0
\end{equation*}
is almost neglected. To our knowledge, the only explicit reference is \cite{AGP}. In such a paper, the barriers $L$ and $U$ are not random (i.e., they are constant) and $(B_n)$ and $(R_n)$ are independent sequences of i.i.d. random variables. Then, it is shown that $Z_n\overset{a.s.}\longrightarrow L$ if $E(B_1)<E(R_1)$ and $Z_n\overset{a.s.}\longrightarrow U$ if $E(B_1)>E(R_1)$. Moreover, in case $E(B_1)=E(R_1)$, a method for estimating the unknown mean is proposed. The method is based on $Z_n\overset{a.s.}\longrightarrow Z$, where the random variable $Z$ has non-atomic distribution, if $E(B_1)=E(R_1)$. However, convergence of $Z_n$ when $E(B_1)=E(R_1)$ is stated without a proof. In addition, the limiting distribution of $Z_n-Z$ is not investigated at all.

\subsection{Results} In a sense, this paper deals with the opposite case with respect to \cite{AGP}. Indeed, while $(B_n)$ and $(R_n)$ are independent sequences in \cite{AGP}, throughout this paper it is assumed that
\begin{equation*}
R_n=B_n\quad\text{for each }n\ge 1.
\end{equation*}
Under this assumption, the following two results are proved.

\begin{thm}\label{t1}
In the framework of Subsection \ref{fr6w}, suppose
\begin{gather*}
R_n=B_n\quad\text{and}\quad\liminf_nE(B_n)>0.
\end{gather*}
Then,
\begin{gather*}
Z_n\overset{a.s.}\longrightarrow Z
\end{gather*}
for some random variable $Z$ such that $L\le Z\le U$ and $0<Z<1$ a.s.
\end{thm}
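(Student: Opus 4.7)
The plan is to use a Doob decomposition, exploiting that $(Z_n)$ is a martingale on $\{L<Z_{n-1}<U\}$ and pointwise monotone toward $[L,U]$ outside. Using $E[X_n\mid\mathcal{G}_{n-1}]=Z_{n-1}$ and the independence of $B_n$ from $\sigma(\mathcal{G}_{n-1},X_n)$, a direct computation shows that $a_n := E[Z_n-Z_{n-1}\mid\mathcal{G}_{n-1}]=0$ on $\{L<Z_{n-1}<U\}$, while on the barrier events
\[a_n = \pm\, Z_{n-1}(1-Z_{n-1})\, E\bigl[B_n/(T_{n-1}+B_n) \mid \mathcal{G}_{n-1}\bigr],\]
with sign $+$ on $\{Z_{n-1}\le L\}$ and $-$ on $\{Z_{n-1}\ge U\}$, where $T_n$ denotes the total urn size at time $n$. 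Moreover the sample paths are pointwise monotone on these events: $Z_n\ge Z_{n-1}$ on $\{Z_{n-1}\le L\}$ and $Z_n\le Z_{n-1}$ on $\{Z_{n-1}\ge U\}$. Setting $A_n=\sum_{k\le n}a_k$ and $M_n=Z_n-A_n$ yields a martingale $M_n$.

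For $M_n$: the jumps are bounded by $C/T_{n-1}$, and $\liminf_n E(B_n)>0$ together with the reinforcement mechanism should force $T_n\to\infty$ (in fact linearly) a.s., so $\sum_n E[(M_n-M_{n-1})^2\mid\mathcal{G}_{n-1}]<\infty$ a.s., and hence $M_n$ is $L^2$-bounded and converges a.s.\ to some $M_\infty$.

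The main obstacle is convergence of $A_n$. Since on the barrier events $|a_n|$ equals the conditional expectation of the unsigned monotone increment, one has $\sum_n|a_n|=\sum_n E\bigl[|Z_n-Z_{n-1}|\,I_{Z_{n-1}\notin(L,U)}\mid\mathcal{G}_{n-1}\bigr]$, and by the pointwise monotonicity the total variation within a single maximal ``excursion'' of $(Z_n)$ outside $(L,U)$ telescopes to a contribution of at most $1$. Thus everything reduces to showing that the number of such excursions is a.s.\ finite, which I would establish by applying Doob's upcrossing inequality to the $L^2$-bounded martingale $M_n$, leveraging its vanishing jumps $\sim 1/T_n$ to convert between upcrossings of $M_n$ and of $(Z_n)$. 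This yields $A_n\to A_\infty$ a.s., and hence $Z_n=M_n+A_n\to Z$ a.s.

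For the location of the limit, if $P(Z>U)>0$ then on this event $Z_n>U$ eventually, and the explicit formula for $a_n$ gives a non-vanishing negative conditional drift, contradicting convergence of $A_n$; hence $Z\le U$ a.s., and symmetrically $Z\ge L$. The strict bounds then follow: on $\{L>0\}$ we have $Z\ge L>0$ directly, while on $\{L=0\}$ the dynamics near $Z=0$ mimic those of a barrier-free randomly reinforced urn, so $0<Z$ follows from the classical results cited in the introduction. A symmetric argument gives $Z<1$.
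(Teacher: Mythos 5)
Your decomposition $Z_n=M_n+A_n$, the sign and pointwise monotonicity of the increments at the barriers, and the idea of controlling crossings of $[L,U]$ by the convergent martingale part are essentially the paper's own proof (there $Z_{n+1}-Z_n=Z_nH_n+\Delta_{n+1}$, and your crossing argument is Lemma \ref{u8h5rd3}). Two steps in the convergence part are, however, stated too loosely. (i) The growth of the urn size is not automatic: if $Z_n\to 0$ while $L>0$, eventually only black draws reinforce and these occur with vanishing conditional probability, so $T_n$ could grow sublinearly and the $L^2$-boundedness of $M_n$ would fail. The paper first proves $0<\liminf_nZ_n\le\limsup_nZ_n<1$ a.s. when $0<L<U<1$ (the stopping-time argument in Lemma \ref{b76ty8k9}, which exploits exactly the monotonicity you noted) and only then deduces $\liminf_nS_n/n>0$; your ``should force $T_n\to\infty$ linearly'' needs this intermediate lemma. (ii) Reducing to ``finitely many excursions outside $(L,U)$'' overshoots: under the hypotheses of Theorem \ref{t1} one cannot at this stage exclude $Z=L$, in which case $Z_n$ may dip below $L$ and return infinitely often, and an upcrossing bound for $M_n$ counts crossings of the fixed-width interval $[L,U]$, not such one-sided excursions. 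What you need, and what your upcrossing argument does deliver, is only that a.s. $Z_n$ is not $\le L$ infinitely often and $\ge U$ infinitely often simultaneously; then $a_n$ has eventually constant sign, so $A_n=Z_n-M_n$ is eventually monotone and bounded, hence convergent. With these repairs the convergence part matches the paper.

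The genuine gap is $0<Z<1$ a.s. On $\{L>0\}$ indeed $Z\ge L>0$, but on $\{L=0\}$ you cannot dispatch $P(Z=0)=0$ by citing barrier-free results: after conditioning on $\{L=0\}$ the upper barrier is still active (the indicator $I_{\{Z_{n-1}<U\}}$ does not disappear from the dynamics), the $B_n$ are independent but not i.i.d. with only $\liminf_nE(B_n)>0$ assumed, and the closest references in the introduction either state the balanced-case convergence without proof (\cite{AGP}) or require $L=0$, $U=1$ and convergent first and second moments (\cite{BCPR}); moreover the process only coincides with a barrier-free urn after a random time, from a random composition. This is where the paper does its hardest work: conditioning on $\{U=1\}$ (for the symmetric statement $P(Z=1)=0$) it shows that $Y_n=Z_n/(1-Z_n)$ is a positive sub-martingale with $\sum_nE(Y_{n+1}-Y_n\mid\mathcal{G}_n)<\infty$ a.s. on $\{Z=1\}$, and this summability rests on proving that the number $L_n$ of red balls grows faster than $\sqrt{n}$ on $\{Z=1\}$, via the super-martingale $J_n/L_n^k$ with $k\in(1,2)$ adapted from \cite{MF}. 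Your proposal contains no substitute for this argument, which occupies roughly half of the paper's proof of Theorem \ref{t1}.
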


\begin{thm}\label{t2}
In the framework of Subsection \ref{fr6w}, suppose
\begin{gather*}
R_n=B_n,\quad m:=\lim_nE(B_n)>0\quad\text{and}\quad q:=\lim_nE(B_n^2).
\end{gather*}
Define
\begin{gather*}
D_n=\sqrt{n}\,(Z_n-Z)\quad\text{and}\quad\sigma^2=q\,Z\,(1-Z)/m^2,
\end{gather*}
where $Z$ is the a.s. limit of $Z_n$. Then,
\begin{gather*}
D_n\longrightarrow\mathcal{N}(0,\,\sigma^2)\quad\text{conditionally a.s. with respect to }(\mathcal{G}_n).
\end{gather*}
Moreover, $Z$ has a non-atomic distribution and $L<Z<U$ a.s.
\end{thm}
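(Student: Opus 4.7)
\bigskip
\noindent\emph{Proof plan.} By Theorem \ref{t1} we already know $Z_n\overset{a.s.}\to Z$ with $L\le Z\le U$ and $0<Z<1$. The strategy is to reduce the dynamics to a tail martingale on the interior event, apply a stable CLT for martingale tail sums, and separately rule out the boundary and prove non-atomicity.

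\noindent\emph{Martingale reduction.} A direct computation from the update rule for $Z_n$ yields, on the event $\{L<Z_{n-1}<U\}$,
\[
Z_n-Z_{n-1}=\frac{B_n\,(X_n-Z_{n-1})}{N_{n-1}+B_n},
\]
where $N_n$ denotes the total number of balls at time $n$. Conditioning first on $X_n$ inside the expectation and using the independence of $B_n$ from $\sigma(\mathcal{G}_{n-1},X_n)$ together with $E(X_n\mid\mathcal{G}_{n-1})=Z_{n-1}$, this is a $(\mathcal{G}_n)$-martingale increment. On the same event $N_n-N_{n-1}=B_n$, so a law-of-large-numbers argument gives $N_n/n\to m$ a.s.\ whenever the urn is eventually interior.

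\noindent\emph{CLT granted the boundary claim.} If $L<Z<U$ a.s., then a.s.\ there exists a random $N$ such that $Z_k\in(L,U)$ for all $k\ge N$, so $(Z_k)_{k\ge N}$ is a tail square-integrable martingale with $Z_k\to Z$. Write $Z-Z_n=\sum_{k>n}(Z_k-Z_{k-1})$; using $N_k\sim mk$, $E(B_k^2)\to q$, and $Z_{k-1}(1-Z_{k-1})\to Z(1-Z)$ one verifies
\[
n\sum_{k>n} E\bigl[(Z_k-Z_{k-1})^2\mid\mathcal{G}_{k-1}\bigr]\longrightarrow\frac{q\,Z(1-Z)}{m^2}=\sigma^2\quad\text{a.s.,}
\]
while the uniform bound $|Z_k-Z_{k-1}|\le c/N_{k-1}=O(1/k)$ furnishes the Lindeberg/negligibility hypothesis. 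A stable martingale CLT for tail sums (in the spirit of \cite{BCPR} and its precursors) then gives $D_n\to\mathcal{N}(0,\sigma^2)$ conditionally a.s.

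\noindent\emph{The main obstacle: boundary and non-atomicity.} The hard step is to prove $P(Z\in\{L,U\})=0$. For $\{Z=U\}$ I argue by contradiction, supposing positive probability; then $0<U<1$ (because $0<Z<1$), so $qU(1-U)/m^2>0$. A pathwise analysis of the update rule shows that any excursion $\{Z_k>U\}$ starts with overshoot $\le c/N_{k-1}$ and, while above $U$, experiences strictly negative conditional drift of order $1/k$, so excursions have bounded expected duration; this gives $(Z_k-U)^+=O(1/k)$ a.s., and in particular $\sqrt{k}\,(Z_k-U)^+\to 0$ on $\{Z=U\}$. On the other hand, $Z_k\in(L,U)$ holds for all but a set of $k$ of vanishing density, so the tail martingale CLT of the previous step still applies on this event and forces $\sqrt{k}\,(Z_k-U)$ to have the symmetric non-degenerate Gaussian limit $\mathcal{N}(0,\,qU(1-U)/m^2)$ in the stable sense, contradicting the one-sided vanishing of the positive part. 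The symmetric argument handles $\{Z=L\}$. For non-atomicity of $Z$ at interior points $z_0\in(L,U)$, once $L<Z<U$ a.s.\ is established, the urn on its tail coincides with a classical randomly reinforced urn with $R_k=B_k$, whose limit is known to be non-atomic under the moment conditions assumed (compare the RRU results cited in the introduction); conditioning on $\mathcal{G}_N$ for $N$ large enough that the urn is interior yields $P(Z=z_0\mid\mathcal{G}_N)=0$ a.s., hence $P(Z=z_0)=0$.
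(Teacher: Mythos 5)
Your martingale reduction and the CLT ``granted the boundary claim'' follow the same route as the paper, but even there you gloss over a real point: $(Z_k)_{k\ge N}$ for a random $N$ is not a martingale, and the increments $Z_{k+1}-Z_k=Z_kH_k+\Delta_{k+1}$ carry a drift term $Z_kH_k$ at every time the process sits outside $(L,U)$. The paper removes this drift globally by setting $T_n=\prod_{i<n}(1+H_i)$ and working with the genuine $\mathcal{G}$-martingale $W_n=Z_n/T_n$ (plus a truncation $W_n^{(\epsilon)}$ to secure square-integrability), which yields the conditional CLT for the corrected quantity $\sqrt{n}\,(F_nZ_n-Z)$ \emph{without} assuming $L<Z<U$. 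That unconditional statement is exactly what you are missing in the step you yourself flag as the main obstacle.

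The boundary and non-atomicity arguments contain genuine gaps. For $\{Z=U\}$ you assert (a) that excursions above $U$ have bounded expected duration and hence $(Z_k-U)^+=O(1/k)$ a.s., and (b) that the tail CLT ``still applies'' on $\{Z=U\}$ because the bad times have vanishing density. Neither is proved: bounded \emph{expected} duration does not give a pathwise $O(1/k)$ bound over infinitely many excursions (the relevant random walk has drift of order $1/k$ and steps of order $1/k$, so excursion lengths have unbounded support), and (b) is essentially circular --- the CLT for $D_n$ was only obtained on the event where the process is eventually interior, and extending it across the excursions is precisely the problem of controlling the accumulated drift $\sum Z_kH_k$. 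The paper's Lemma \ref{j6re4} sidesteps all of this with a different idea: on $H=\{Z=L>0\}$ the variable $D_n=\sqrt{n}(Z_n-L)$ is $\mathcal{G}_n$-measurable, so $Q(D_n\le 0\mid\mathcal{G}_n)$ is an indicator; comparing $D_n$ with $\sqrt{n}(Z_nF_n-Z)$ (using $F_n\ge 1$ eventually on $H$) and invoking the unconditional CLT under $Q=P(\cdot\mid H)$ via Lemma \ref{k9cxs34} forces that indicator to $1$, i.e.\ $Z_n\le L$ eventually, whence $Z_n\to 1$ on $H$, a contradiction. Your non-atomicity argument has the same structural problem: the event that the urn stays interior after time $N$ is not $\mathcal{G}_N$-measurable, so conditioning on $\mathcal{G}_N$ does not reduce the tail to a classical RRU, and non-atomicity for RRUs under only the stated moment hypotheses is not an off-the-shelf citation. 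The paper instead notes that on $\{Z=z\}$ the conditional law of $D_n$ given $\mathcal{G}_n$ is the point mass $\delta_{\sqrt{n}(Z_n-z)}$, which cannot converge weakly to a non-degenerate Gaussian --- a one-line contradiction you should adopt.
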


In Theorem \ref{t2}, $\mathcal{N}(a,b)$ denotes the Gaussian law with
mean $a$ and variance $b\geq 0$, where $\mathcal{N}(a,0)=\delta_a$. Almost sure conditional convergence is a strong form of stable convergence, introduced in \cite{CLP}-\cite{CRIM} and involved in \cite{AMS}, \cite{BCPR}, \cite{ZHANG}, \cite{ZHCC}. The general definition is discussed in Section \ref{ascconv}. In the present case, it means that
\begin{gather*}
P\bigl(D_n\in\cdot\mid\mathcal{G}_n\bigr)(\omega)\overset{weakly}\longrightarrow\mathcal{N}(0,\,\sigma^2(\omega))\quad\text{for almost all }\omega\in\Omega
\end{gather*}
where $P\bigl(D_n\in\cdot\mid\mathcal{G}_n\bigr)$ is a regular version of the conditional distribution of $D_n$ given $\mathcal{G}_n$. Thus, in particular, Theorem \ref{t2} yields
\begin{gather*}
D_n\longrightarrow\mathcal{N}(0,\,\sigma^2)\quad\text{stably};
\end{gather*}
see Lemma \ref{m6c8}.

Theorems \ref{t1}-\ref{t2} establish the asymptotics for randomly reinforced urns with random barriers when $R_n=B_n$. The case $R_n\ne B_n$, as well as some other possible developments, are discussed in Section \ref{g6r4e}. A last note is that Theorem \ref{t2} agrees with the result obtained when random barriers are not taken into account. Indeed, if $L=0$ and $U=1$, Theorem \ref{t2} follows from \cite[Corollary 3]{BCPR}.

\section{Almost sure conditional convergence}\label{ascconv}

Almost sure conditional convergence, introduced in \cite{CLP}-\cite{CRIM}, may be regarded as a strong form of stable convergence. We now make it precise.

Let $(\Omega,\mathcal{A},P)$ be a probability space and $S$ a metric
space. A {\em kernel} on $S$ (or a {\em random probability measure}
on $S$) is a measurable collection $N=\{N(\omega):\omega\in\Omega\}$
of probability measures on the Borel $\sigma$-field on $S$.
Measurability means that
\begin{equation*}
N(\cdot)(f)=\int f(x)\,N(\cdot)(dx)
\end{equation*}
is a real random variable for each bounded Borel map $f:S\rightarrow\mathbb{R}$. To denote such random variable, in the sequel, we will often write $N(f)$ instead of $N(\cdot)(f)$.

For each $n\ge 1$, fix a sub-$\sigma$-field $\mathcal{F}_n\subset\mathcal{A}$. Also, let $(Y_n)$ be a sequence of $S$-valued random variables and $N$ a
kernel on $S$. Say that $Y_n$ converges to $N$, {\it conditionally a.s. with respect to} $(\mathcal{F}_n)$, if
\begin{gather}\label{sc}
E\bigl\{f(Y_n)\mid\mathcal{F}_n\bigr\}\overset{a.s.}\longrightarrow N(f)\quad\text{for each }f\in C_b(S).
\end{gather}
If $S$ is Polish, condition \eqref{sc} has a quite transparent meaning. Suppose in fact $S$ is Polish and fix a regular version $P\bigl(Y_n\in\cdot\mid\mathcal{F}_n\bigr)$ of the conditional distribution of $Y_n$ given $\mathcal{F}_n$. Then, condition \eqref{sc} is equivalent to
\begin{gather*}
P\bigl(Y_n\in\cdot\mid\mathcal{F}_n\bigr)(\omega)\overset{weakly}\longrightarrow N(\omega)\quad\text{for almost all }\omega\in\Omega.
\end{gather*}

So far, $(\mathcal{F}_n)$ is an arbitrary sequence of sub-$\sigma$-fields. Suppose now that $(\mathcal{F}_n)$ is a filtration, in the sense that $\mathcal{F}_n\subset\mathcal{F}_{n+1}\subset\mathcal{A}$ for each $n$. Then, under a mild measurability condition, almost sure conditional convergence implies stable convergence. This is noted in \cite[Section 5]{CLP} but we give a proof to make the paper self-contained. Let
\begin{gather*}
\mathcal{F}_\infty=\sigma\bigl(\cup_n\mathcal{F}_n\bigr).
\end{gather*}

\begin{lem}\label{m6c8}
Suppose $(\mathcal{F}_n)$ is a filtration such that
\begin{gather*}
N(f)\text{ and }Y_n\text{ are }\mathcal{F}_\infty\text{-measurable for all }f\in C_b(S)\text{ and }n\ge 1.
\end{gather*}
If $Y_n\rightarrow N$ conditionally a.s. with respect to $(\mathcal{F}_n)$, then $Y_n\rightarrow N$ stably, that is
\begin{gather*}
E\bigl\{N(f)\mid H\bigr\}=\lim_nE\bigl\{f(Y_n)\mid H\bigr\}
\end{gather*}
whenever $f\in C_b(S)$, $H\in\mathcal{A}$ and $P(H)>0$.
\end{lem}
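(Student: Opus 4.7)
The plan is to verify the stable convergence definition by testing against indicators $I_H$ and using the $\mathcal{F}_\infty$-measurability to reduce to $\mathcal{F}_\infty$-measurable test functions, then exhausting $\mathcal{F}_\infty$ by the filtration via martingale approximation.

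First, fix $f\in C_b(S)$ and $H\in\mathcal{A}$ with $P(H)>0$. Since $f(Y_n)$ and $N(f)$ are $\mathcal{F}_\infty$-measurable, I can write $E\{f(Y_n)I_H\}=E\{f(Y_n)\,g\}$ and $E\{N(f)I_H\}=E\{N(f)\,g\}$, where $g:=P(H\mid\mathcal{F}_\infty)$. Thus it suffices to show
\begin{gather*}
\lim_n E\{f(Y_n)\,g\}=E\{N(f)\,g\}
\end{gather*}
for every bounded $\mathcal{F}_\infty$-measurable $g$; dividing by $P(H)$ then gives the stated identity for conditional expectations.

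Next, I approximate $g$ by $g_m:=E(g\mid\mathcal{F}_m)$. The key step is that if $g$ is $\mathcal{F}_m$-measurable and bounded, then for every $n\ge m$ the tower property yields
\begin{gather*}
E\{f(Y_n)\,g\}=E\bigl\{g\,E(f(Y_n)\mid\mathcal{F}_n)\bigr\},
\end{gather*}
because $g$ is also $\mathcal{F}_n$-measurable. By the conditional a.s.\ convergence hypothesis, $E(f(Y_n)\mid\mathcal{F}_n)\to N(f)$ a.s., and the sequence is uniformly bounded by $\norm{f}_\infty$, so dominated convergence gives $\lim_n E\{f(Y_n)\,g\}=E\{g\,N(f)\}$ whenever $g$ is bounded and $\mathcal{F}_m$-measurable for some $m$.

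Finally, I pass from $g_m$ to a general bounded $\mathcal{F}_\infty$-measurable $g$. By Doob's martingale convergence theorem, $g_m\to g$ in $L^1$, hence
\begin{gather*}
\bigl|E\{f(Y_n)\,g\}-E\{f(Y_n)\,g_m\}\bigr|\le\norm{f}_\infty\,E\abs{g-g_m}\longrightarrow 0
\end{gather*}
uniformly in $n$ as $m\to\infty$, and the same estimate applies with $N(f)$ in place of $f(Y_n)$ since $|N(f)|\le\norm{f}_\infty$. Combining these two approximations with the already established convergence for each $g_m$ yields $\lim_n E\{f(Y_n)\,g\}=E\{N(f)\,g\}$, completing the proof. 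The only mild obstacle is the interchange of the limits in $m$ and $n$, which is handled by the uniform-in-$n$ bound above; the rest is a routine application of the tower property and bounded convergence.
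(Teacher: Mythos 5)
Your proof is correct and follows essentially the same route as the paper: project $I_H$ onto $\mathcal{F}_\infty$ using the measurability hypothesis, use the tower property over $\mathcal{F}_n$ for test functions measurable with respect to some $\mathcal{F}_m$, and then pass to general bounded $\mathcal{F}_\infty$-measurable $g$ by approximation. The only (cosmetic) difference is that where the paper invokes ``standard arguments'' on the field $\cup_n\mathcal{F}_n$, you make the approximation explicit via the $L^1$ martingale convergence $E(g\mid\mathcal{F}_m)\to g$ together with the uniform-in-$n$ bound, which is a clean way to justify the interchange of limits.
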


\begin{proof}
Let $f\in C_b(S)$ and $H\in\mathcal{A}$. If $H\in\cup_n\mathcal{F}_n$, then $H\in\mathcal{F}_n$ for each sufficiently large $n$, so that
\begin{gather*}
E\bigl\{N(f)\,I_H\bigr\}=\lim_nE\Bigl(E\bigl\{f(Y_n)\mid\mathcal{F}_n\bigr\}\,I_H\Bigr)=\lim_nE\bigl\{f(Y_n)\,I_H\}.
\end{gather*}
Since $\cup_n\mathcal{F}_n$ is a field, by standard arguments one obtains
\begin{gather*}
E\bigl\{N(f)\,V\bigr\}=\lim_nE\bigl\{f(Y_n)\,V\}\quad\text{whenever }V\text{ is bounded and }\mathcal{F}_\infty\text{-measurable}.
\end{gather*}
Hence, for arbitrary $H\in\mathcal{A}$, the measurability condition implies
\begin{gather*}
E\bigl\{N(f)\,I_H\bigr\}=E\Bigl(N(f)\,E(I_H\mid\mathcal{F}_\infty)\Bigr)=\lim_nE\Bigl(f(Y_n)\,E(I_H\mid\mathcal{F}_\infty)\Bigr)=\lim_nE\bigl\{f(Y_n)\,I_H\}.
\end{gather*}
\end{proof}
Note that the measurability condition of Lemma \ref{m6c8} is trivially true if $\mathcal{F}_\infty=\mathcal{A}$.

We refer to \cite{CLP}-\cite{CRIM} for more on almost sure conditional convergence. Here, for easy of reference, we report three useful facts. The first and the second are already known (see \cite[Proposition 1 and Lemma 2]{BCPR} and \cite[Theorem 2.2]{CRIM}) while the third is a quick consequence of condition \eqref{sc}. In each of these facts, $(\mathcal{F}_n)$ is a filtration.

\begin{lem}\label{f78j2nhj7t}
Suppose the $Y_n$ are real random variables such that $Y_n\overset{a.s.}\longrightarrow Y$. Then,
\begin{gather*}
\sqrt{n}\,(Y_n-Y)\longrightarrow\mathcal{N}(0,U),\quad\text{conditionally a.s. with respect to }(\mathcal{F}_n),
\end{gather*}
where $U$ is a real random variable, provided

\vspace{0.2cm}

\begin{itemize}

\item[(i)] $(Y_n)$ is a uniformly integrable martingale with respect to $(\mathcal{F}_n)$;

\vspace{0.2cm}

\item[(ii)] $E\bigl\{\sup_n\sqrt{n}\,\abs{Y_n-Y_{n-1}}\bigr\}<\infty$;

\vspace{0.2cm}

\item[(iii)] $n\sum_{k\ge n}(Y_k-Y_{k-1})^2\overset{a.s.}\longrightarrow U$.

\end{itemize}

\end{lem}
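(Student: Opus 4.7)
My plan is to verify condition \eqref{sc} for the real-valued random variables $D_n:=\sqrt{n}\,(Y_n-Y)$ by checking pointwise convergence of the conditional characteristic functions: for each $t\in\mathbb{R}$,
\begin{gather*}
\varphi_n(t):=E\bigl\{e^{itD_n}\mid\mathcal{F}_n\bigr\}\overset{a.s.}\longrightarrow e^{-t^2U/2}.
\end{gather*}
A standard separability/monotone-class argument, applied on a countable dense set of $t$'s, then upgrades this to $E\{f(D_n)\mid\mathcal{F}_n\}\to\int f\,d\mathcal{N}(0,U)$ a.s.\ for every $f\in C_b(\mathbb{R})$ simultaneously on a set of full measure.

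First, since $(Y_n)$ is a uniformly integrable martingale, $Y_n\to Y$ holds in $L^1$ as well as a.s., so with $\Delta_k:=Y_k-Y_{k-1}$ one has the tail representation $Y_n-Y=-\sum_{k>n}\Delta_k$ a.s., whence $D_n=-\sqrt{n}\sum_{k>n}\Delta_k$. This writes $D_n$ as an infinite sum of martingale increments, to which a conditional martingale CLT should apply.

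The analytic heart is a conditional CLT based on a compensated product. Decompose $e^{itD_n}=\prod_{k>n}e^{-it\sqrt{n}\Delta_k}$, set $\psi_k^{(n)}(t):=E\{e^{-it\sqrt{n}\Delta_k}\mid\mathcal{F}_{k-1}\}$, and note that the ratio $\prod_{k=n+1}^j e^{-it\sqrt{n}\Delta_k}/\psi_k^{(n)}(t)$ is an $\mathcal{F}_j$-martingale in $j$, converging to $e^{itD_n}\big/\prod_{k>n}\psi_k^{(n)}(t)$. A Taylor expansion gives $\psi_k^{(n)}(t)=1-\tfrac{t^2 n}{2}E\{\Delta_k^2\mid\mathcal{F}_{k-1}\}+r_k^{(n)}$, reducing the problem to proving (a)\ $n\sum_{k>n}E\{\Delta_k^2\mid\mathcal{F}_{k-1}\}\overset{a.s.}\longrightarrow U$, and (b)\ $\sum_{k>n}|r_k^{(n)}|\overset{a.s.}\longrightarrow 0$. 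For (b), the integrable envelope $\sup_k\sqrt{k}\,|\Delta_k|$ furnished by (ii) yields a conditional Lindeberg-type estimate that absorbs the remainder.

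The main obstacle I anticipate is (a), because condition (iii) supplies only the \emph{empirical} tail quadratic variation $n\sum_{k>n}\Delta_k^2\to U$. The gap $n\sum_{k>n}\bigl(\Delta_k^2-E\{\Delta_k^2\mid\mathcal{F}_{k-1}\}\bigr)$ is itself a tail sum of martingale differences whose terms, via (ii), are dominated by an integrable variable with a useful $k^{-1}$ weighting; a Burkholder/Kronecker argument after a suitable truncation then yields its a.s.\ negligibility. These are essentially the estimates carried out in \cite[Proposition 1 and Lemma 2]{BCPR} and \cite[Theorem 2.2]{CRIM}, to which the remaining routine details can be deferred.
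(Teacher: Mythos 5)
The paper does not actually prove this lemma: it is imported verbatim from \cite{BCPR} (Proposition~1 and Lemma~2) and \cite{CRIM} (Theorem~2.2), so the only comparison available is with the proofs given there. Your overall framework --- pass to conditional characteristic functions, use uniform integrability to write $Y-Y_n=\sum_{k>n}\Delta_k$, and run a CLT on the tail martingale --- is indeed the framework of those proofs. The gap is your step~(a): under hypotheses (i)--(iii) the predictable tail quadratic variation $n\sum_{k>n}E\{\Delta_k^2\mid\mathcal{F}_{k-1}\}$ need \emph{not} converge to $U$; it can even be identically infinite. Take $\mathcal{F}_n=\sigma(\xi_1,\dots,\xi_n)$ with $\xi_k$ independent, $P(\xi_k=\pm k^{3/2})=\tfrac12\,k^{-3}$, $P(\xi_k=0)=1-k^{-3}$, and $Y_n=\sum_{k\le n}\xi_k/\sqrt{k}$. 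Then $(Y_n)$ is a martingale dominated by the integrable variable $\sum_k|\xi_k|/\sqrt{k}$, hence uniformly integrable and a.s.\ convergent, so (i) holds; $\sup_n\sqrt{n}\,|\Delta_n|=\sup_n|\xi_n|$ is integrable because $P(\sup_n|\xi_n|>t)\le\sum_{n>t^{2/3}}n^{-3}=O(t^{-4/3})$, so (ii) holds; and since a.s.\ only finitely many $\xi_k$ are nonzero, $n\sum_{k\ge n}\Delta_k^2\to U=0$, so (iii) holds. Yet $E\{\Delta_k^2\mid\mathcal{F}_{k-1}\}=E(\xi_k^2)/k=1/k$, whence $n\sum_{k>n}E\{\Delta_k^2\mid\mathcal{F}_{k-1}\}=+\infty$ for every $n$. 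So the difference $n\sum_{k>n}\bigl(\Delta_k^2-E\{\Delta_k^2\mid\mathcal{F}_{k-1}\}\bigr)$ is not a.s.\ negligible, and no truncation or Burkholder argument can make it so. The same example destroys the Taylor expansion of $\psi_k^{(n)}$: the conditional second and third moments are driven by rare large jumps that a.s.\ never occur, so $\sum_{k>n}\bigl(1-\psi_k^{(n)}(t)\bigr)$ and $\sum_{k>n}|r_k^{(n)}|$ both diverge, even though the conclusion of the lemma holds here (with $U=0$).

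The fix, which is the route of \cite{CLP} and \cite{CRIM}, is to avoid conditional moments of the increments altogether. Writing $e^{ix}=(1+ix)\exp\bigl(-x^2/2+\rho(x)\bigr)$ with $|\rho(x)|\le|x|^3$ for small $|x|$, one gets
\begin{gather*}
E\bigl\{e^{it\sqrt{n}\sum_{k>n}\Delta_k}\mid\mathcal{F}_n\bigr\}
=E\Bigl\{\prod_{k>n}\bigl(1+it\sqrt{n}\,\Delta_k\bigr)\,
\exp\Bigl(-\tfrac{t^2}{2}\,n\sum_{k>n}\Delta_k^2+\sum_{k>n}\rho\bigl(t\sqrt{n}\,\Delta_k\bigr)\Bigr)\mid\mathcal{F}_n\Bigr\}.
\end{gather*}
The product $\prod_{k>n}(1+it\sqrt{n}\,\Delta_k)$ is a limit of finite products each with conditional expectation $1$ given $\mathcal{F}_n$ (conditions (i)--(ii) serve to justify the passage to the limit), the exponent involves exactly the \emph{empirical} quadratic variation of hypothesis (iii), and the remainder is killed by $\sup_{k>n}\sqrt{k}\,|\Delta_k|\overset{a.s.}\longrightarrow 0$, which is itself a consequence of (iii). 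If you insist on the compensated product $\prod_k e^{-it\sqrt{n}\Delta_k}/\psi_k^{(n)}(t)$, you must assume the predictable version of (iii) in addition to the empirical one, as in Heyde's classical mixing CLT for martingale tail sums --- a strictly stronger hypothesis than the lemma's.
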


\begin{lem}\label{cx3ej876zq1}
Suppose the $Y_n$ are real random variables. If $(Y_n)$ is adapted to $(\mathcal{F}_n)$, $\sum_n n^{-2}E(Y_n^2)<\infty$ and
$E\bigl(Y_{n+1}\mid\mathcal{F}_n\bigr)\overset{a.s.}\longrightarrow
U$, for some real random variable $U$, then
\begin{equation*}
n\sum_{k\ge n}\frac{Y_k}{k^2}\,\overset{a.s.}\longrightarrow
U\quad\text{and}\quad\frac{1}{n}\sum_{k=1}^nY_k\overset{a.s.}\longrightarrow
U.
\end{equation*}
\end{lem}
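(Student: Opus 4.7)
\medskip

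\noindent\textbf{Proof plan for Lemma \ref{cx3ej876zq1}.} The plan is to reduce both claims to two separate inputs: a Cesàro/Toeplitz-type averaging of an almost surely convergent predictable sequence, and a Kronecker/Abel-type argument for a martingale difference sequence. Concretely, I write
\begin{equation*}
Y_k=M_k+E(Y_k\mid\mathcal{F}_{k-1}),\qquad M_k:=Y_k-E(Y_k\mid\mathcal{F}_{k-1}),
\end{equation*}
so that $(M_k)$ is an $(\mathcal{F}_k)$-martingale difference sequence with $E(M_k^2)\le E(Y_k^2)$. By hypothesis the predictable part $E(Y_k\mid\mathcal{F}_{k-1})$ converges a.s.\ to $U$, hence path-by-path I can apply the classical Toeplitz lemma with weights $(1/n,\ldots,1/n)$ and with weights $n/k^2$ (for $k\ge n$): the former sum equals $1$ trivially, the latter tends to $1$ since $n\sum_{k\ge n}1/k^2\to 1$. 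This yields both $\frac1n\sum_{k=1}^n E(Y_k\mid\mathcal{F}_{k-1})\to U$ and $n\sum_{k\ge n}E(Y_k\mid\mathcal{F}_{k-1})/k^2\to U$ a.s.

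For the martingale part, the central object is the $\mathcal{F}_n$-martingale
\begin{equation*}
W_n:=\sum_{k=1}^n\frac{M_k}{k},\qquad\sup_nE(W_n^2)=\sum_k\frac{E(M_k^2)}{k^2}\le\sum_k\frac{E(Y_k^2)}{k^2}<\infty.
\end{equation*}
Hence $W_n\to W_\infty$ a.s.\ (and in $L^2$). Kronecker's lemma applied pathwise then gives $\frac1n\sum_{k=1}^n M_k\to 0$ a.s., which combined with the predictable part settles the Cesàro claim.

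For the tail claim, I use summation by parts. Writing $M_k/k^2=(M_k/k)\cdot(1/k)$ and applying Abel summation to the partial sums $W_k$, one obtains the identity
\begin{equation*}
n\sum_{k\ge n}\frac{M_k}{k^2}=-W_{n-1}+n\sum_{k\ge n}\frac{W_k}{k(k+1)}+\text{boundary terms that vanish as the upper index}\to\infty,
\end{equation*}
where the key telescoping $\sum_{k\ge n}\frac{1}{k(k+1)}=\frac{1}{n}$ makes the second term a genuine averaging of $W_k\to W_\infty$ with weights summing exactly to $1$. A pathwise Toeplitz argument gives $n\sum_{k\ge n}W_k/(k(k+1))\to W_\infty$ a.s., and thus $n\sum_{k\ge n}M_k/k^2\to -W_\infty+W_\infty=0$ a.s. Combined with the predictable part, this yields $n\sum_{k\ge n}Y_k/k^2\to U$ a.s.

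The main obstacle is the Abel summation step for the tail: one must choose the bracketing carefully so that the boundary contribution vanishes (using $W_N/N\to 0$ a.s.\ from the a.s.\ finiteness of $W_\infty$) and so that the remaining series has weights summing exactly to $1/n$, allowing the factor $n$ to produce a bona fide averaging operator. The Cesàro/Kronecker half is routine once the martingale/predictable decomposition is in place; the harder, and less symmetric, half is the tail sum, but the telescoping identity $\frac1{k(k+1)}=\frac1k-\frac1{k+1}$ is precisely what makes the argument close.
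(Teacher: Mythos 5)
Your proof is correct. The paper does not prove this lemma itself --- it states it as known and cites \cite{BCPR} (Proposition 1 and Lemma 2) --- and your argument, decomposing $Y_k$ into the martingale difference $M_k=Y_k-E(Y_k\mid\mathcal{F}_{k-1})$ plus the predictable part, then using the $L^2$-bounded martingale $\sum_k M_k/k$ with Kronecker's lemma for the Ces\`aro mean and Abel summation with the telescoping $\tfrac{1}{k(k+1)}=\tfrac1k-\tfrac1{k+1}$ for the tail sum, is essentially the standard proof given in that reference.
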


\begin{lem}\label{k9cxs34}
Suppose $Y_n\rightarrow N$ conditionally a.s. with respect to $(\mathcal{F}_n)$. Define $Q(A)=E\bigl\{I_A\,V\}$ for $A\in\mathcal{A}$, where $V\ge 0$, $E(V)=1$ and $V$ is $\mathcal{F}_\infty$-measurable. Then $Y_n\rightarrow N$, conditionally a.s. with respect to $(\mathcal{F}_n)$, under $Q$ as well.
\end{lem}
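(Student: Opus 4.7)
The plan is to apply the Bayes formula for conditional expectation under a change of measure and then reduce the required $Q$-a.s.\ convergence to the $P$-a.s.\ convergence already assumed. Throughout, $E$ denotes expectation under $P$, $E_Q$ expectation under $Q$, and I set $M_n:=E\{V\mid\mathcal{F}_n\}$. Since $V\in L^1(P)$ is $\mathcal{F}_\infty$-measurable, Doob's martingale convergence theorem gives $M_n\to V$ both $P$-a.s.\ and in $L^1(P)$. Fix $f\in C_b(S)$. By the Bayes formula,
\begin{gather*}
E_Q\bigl\{f(Y_n)\mid\mathcal{F}_n\bigr\}=\frac{E\bigl\{f(Y_n)\,V\mid\mathcal{F}_n\bigr\}}{M_n}\quad Q\text{-a.s.},
\end{gather*}
and writing $V=M_n+(V-M_n)$ and pulling the $\mathcal{F}_n$-measurable factor $M_n$ out of the conditional expectation yields the decomposition
\begin{gather*}
E_Q\bigl\{f(Y_n)\mid\mathcal{F}_n\bigr\}=E\bigl\{f(Y_n)\mid\mathcal{F}_n\bigr\}+\frac{E\bigl\{f(Y_n)(V-M_n)\mid\mathcal{F}_n\bigr\}}{M_n}.
\end{gather*}

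The first summand tends to $N(f)$ $P$-a.s.\ by the standing hypothesis \eqref{sc}, hence also $Q$-a.s.\ since $Q\ll P$. The denominator $M_n$ converges to $V$ $P$-a.s., and because $\{V>0\}$ has full $Q$-measure it is eventually bounded away from $0$ along $Q$-a.e.\ trajectory. Setting $\phi_n:=E\{|V-M_n|\mid\mathcal{F}_n\}$ and using $|f|\le\|f\|_\infty$, the whole matter reduces to showing
\begin{gather*}
\phi_n\to 0\quad P\text{-a.s.}
\end{gather*}

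This last step is the one I expect to be the main obstacle, since pointwise control of the conditional quantities $\phi_n$ does not follow directly from $L^1(P)$-convergence of $M_n$ to $V$; I would handle it by a two-scale approximation. Fix $m\ge 1$. For $n\ge m$, $M_m$ is $\mathcal{F}_n$-measurable, so $M_m-M_n=E\{M_m-V\mid\mathcal{F}_n\}$ and hence $|M_m-M_n|\le E\{|V-M_m|\mid\mathcal{F}_n\}$. The triangle inequality then gives
\begin{gather*}
\phi_n\le E\bigl\{|V-M_m|\mid\mathcal{F}_n\bigr\}+|M_m-M_n|\le 2\,E\bigl\{|V-M_m|\mid\mathcal{F}_n\bigr\}\quad (n\ge m).
\end{gather*}
The majorant is a non-negative $P$-martingale in $n$ whose $\mathcal{F}_\infty$-measurable terminal value is $|V-M_m|$, so by martingale convergence it tends $P$-a.s.\ to $|V-M_m|$ as $n\to\infty$. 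Hence $\limsup_n\phi_n\le 2\,|V-M_m|$ $P$-a.s.\ for each $m$; intersecting the countably many exceptional sets and letting $m\to\infty$, using $M_m\to V$ $P$-a.s., delivers $\phi_n\to 0$ $P$-a.s.\ and completes the plan.
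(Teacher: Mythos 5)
Your proof is correct. The skeleton is the same as the paper's: the Bayes formula under the change of measure, followed by the decomposition of $E\{f(Y_n)V\mid\mathcal{F}_n\}/M_n$ into $E\{f(Y_n)\mid\mathcal{F}_n\}$ plus an error term bounded by $\|f\|_\infty\,\phi_n/M_n$ with $\phi_n=E\{|V-M_n|\mid\mathcal{F}_n\}$, exactly the paper's $K_n=V-E(V\mid\mathcal{F}_n)$. Where you genuinely diverge is in the treatment of the key convergence $\phi_n\to 0$ a.s. The paper first assumes $V$ bounded, invokes the Blackwell--Dubins form of the martingale convergence theorem to get $E\{|K_n|\mid\mathcal{F}_n\}\to 0$ a.s., and then removes the boundedness by truncating $V$ at level $v$ and letting $v\to\infty$ (a step it leaves to the reader). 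You instead prove $\phi_n\to 0$ a.s.\ directly for arbitrary integrable $\mathcal{F}_\infty$-measurable $V$, via the two-scale bound $\phi_n\le 2\,E\{|V-M_m|\mid\mathcal{F}_n\}$ for $n\ge m$ and L\'evy's upward theorem applied to the closed Doob martingale $E\{|V-M_m|\mid\mathcal{F}_n\}$. This buys you two things: the argument is self-contained (no appeal to \cite{BD}), and the bounded-then-truncate structure of the paper's proof disappears entirely, since nothing in your estimate uses $\sup V<\infty$. The paper's route is shorter on the page only because it outsources precisely this step to a citation. All the auxiliary points in your write-up (absolute continuity $Q\ll P$, positivity of $M_n$ and of $V$ under $Q$, measurability of $|M_m-M_n|$ with respect to $\mathcal{F}_n$) check out.
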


\begin{proof}
Suppose first $\sup V<\infty$ and define $K_n=V-E(V\mid\mathcal{F}_n)$. Given $f\in C_b(S)$,
\begin{gather*}
E_Q\bigl\{f(Y_n)\mid\mathcal{F}_n\bigr\}=\frac{E\bigl\{V\,f(Y_n)\mid\mathcal{F}_n\bigr\}}{E(V\mid\mathcal{F}_n)}
=E\bigl\{f(Y_n)\mid\mathcal{F}_n\bigr\}+\frac{E\bigl\{K_n\,f(Y_n)\mid\mathcal{F}_n\bigr\}}{E(V\mid\mathcal{F}_n)},\quad Q\text{-a.s.,}
\end{gather*}
where $E_Q$ denotes expectation under $Q$. Since $\sigma(V)\subset\mathcal{F}_\infty$ and $\abs{K_n}\le\sup V$ a.s., the martingale convergence theorem (in the version of \cite{BD}) implies
\begin{gather*}
E(V\mid\mathcal{F}_n)\overset{a.s.}\longrightarrow E(V\mid\mathcal{F}_\infty)=V\quad\text{and}
\\\Abs{\,E\bigl\{K_n\,f(Y_n)\mid\mathcal{F}_n\bigr\}}\le\sup\abs{f}\,E\bigl\{\abs{K_n}\mid\mathcal{F}_n\bigr\}\overset{a.s.}\longrightarrow 0.
\end{gather*}
Since $Q(V>0)=1$, one obtains $E_Q\bigl\{f(Y_n)\mid\mathcal{F}_n\bigr\}\rightarrow N(f)$, $Q$-a.s. This concludes the proof for bounded $V$. If $V$ is not bounded, it suffices to reply $V$ with $V\,I_{\{V\le v\}}/E\bigl(V\,I_{\{V\le v\}}\bigr)$ and to take the limit as $v\rightarrow\infty$.
\end{proof}

\section{Proofs}

In the sequel, for any events $A_n\in\mathcal{A}$ and $B\in\mathcal{A}$, we say that $A_n$ {\em is eventually true on } $B$ (or, more briefly, $A_n$ {\em eventually on } $B$) whenever
\begin{gather*}
P\bigl\{\omega\in B:\omega\notin A_n\text{ for infinitely many }n\bigr\}=0.
\end{gather*}

Assume the conditions of Subsection \ref{fr6w} and $R_n=B_n$. Let
\begin{gather*}
S_n=b+r+\sum_{i=1}^n\bigl[X_i\,B_i\,I_{\{Z_{i-1}<U\}}+(1-X_i)\,B_i\,I_{\{Z_{i-1}>L\}}\bigr]
\end{gather*}
denote the denominator of $Z_n$, namely, the number of balls in the urn at time $n$. Also, the filtration $(\mathcal{G}_n)$ is abbreviated by $\mathcal{G}$.

After some (tedious but easy) algebra, one obtains
\begin{gather*}
Z_{n+1}-Z_n=Z_n\,H_n+\Delta_{n+1},
\end{gather*}
where
\begin{gather*}
H_n=\frac{B_{n+1}}{S_n+B_{n+1}}\,(1-Z_n)\,\Bigl(I_{\{Z_n<U\}}-I_{\{Z_n>L\}}\Bigr),
\\\Delta_{n+1}=\frac{B_{n+1}}{S_n+B_{n+1}}\,(X_{n+1}-Z_n)\,\Bigl((1-Z_n)I_{\{Z_n<U\}}+Z_nI_{\{Z_n>L\}}\Bigr).
\end{gather*}
This writing of $Z_{n+1}-Z_n$ is fundamental for our purposes.

\subsection{Proof of Theorem \ref{t1}}\label{u6t54e}
In this subsection, it is assumed that
\begin{gather*}
\liminf_nE(B_n)>0.
\end{gather*}

Since
\begin{gather*}
E\bigl(X_{n+1}\mid\mathcal{G}_n,\,B_{n+1}\bigr)=E(X_{n+1}\mid\mathcal{G}_n)=Z_n\quad\text{a.s.},
\end{gather*}
then
\begin{gather*}
E(\Delta_{n+1}\mid\mathcal{G}_n)=0\quad\text{a.s.}
\end{gather*}
This fact has two useful consequences. First,
\begin{gather*}
M_n=\sum_{i=1}^n\Delta_i
\end{gather*}
is a $\mathcal{G}$-martingale. Second, $(Z_n)$ is a $\mathcal{G}$-sub-martingale in case $U=1$. In fact, $U=1$ implies $H_n\ge 0$, so that
\begin{gather*}
E\bigl\{Z_{n+1}-Z_n\mid\mathcal{G}_n\bigr\}=E\bigl\{Z_nH_n+\Delta_{n+1}\mid\mathcal{G}_n\bigr\}=Z_n\,E(H_n\mid\mathcal{G}_n)\ge 0\quad\text{a.s.}
\end{gather*}
Similarly, if $L=0$ then $(Z_n)$ is a $\mathcal{G}$-super-martingale. Therefore, it is not hard to see that $Z_n$ converges a.s. on the set $\{L=0\}\cup\{U=1\}$.

We next state two lemmas.

\begin{lem}\label{b76ty8k9} Let $Z_*=\liminf_nZ_n$ and $Z^*=\limsup_nZ_n$. Each of the following statements implies the subsequent:

\vspace{0.2cm}

\begin{itemize}

\item[(a)] $0<L<U<1$ a.s.;

\vspace{0.2cm}

\item[(b)] $0<Z_*\le Z^*<1$ a.s.;

\vspace{0.2cm}

\item[(c)] $\liminf_n(S_n/n)>0$ a.s.;

\vspace{0.2cm}

\item[(d)] $M_n$ converges a.s.

\end{itemize}

\vspace{0.2cm}

\end{lem}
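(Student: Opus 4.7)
The plan is to establish the chain (a) $\Rightarrow$ (b) $\Rightarrow$ (c) $\Rightarrow$ (d) in order, exploiting throughout the decomposition $Z_{n+1}-Z_n=Z_n H_n+\Delta_{n+1}$ derived above.

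For (a) $\Rightarrow$ (b), I would first invoke the symmetry swapping black and red (equivalently $Z_n\leftrightarrow 1-Z_n$, $L\leftrightarrow 1-U$), which reduces matters to showing $Z^*<1$ a.s. whenever $U<1$. The structural key is that on $\{Z_n>U\}$ one has $Z_n>U\ge L$, so a black draw triggers no reinforcement (since $\{Z_n<U\}$ fails) while a red draw adds $B_{n+1}$ red balls; hence $Z_{n+1}\le Z_n$ deterministically. I would then argue by contradiction on $\{Z^*=1\}$. If $Z_n>U$ for every $n\ge n_0$, the monotonicity forces $Z_n\downarrow Z_\infty\le Z_{n_0}\le 1-r/S_{n_0}<1$, contradicting $Z^*=1$. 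Otherwise, setting $\sigma_n=\max\{k\le n:Z_k\le U\}$, monotonicity along $(\sigma_n,n]$ together with the one-step bound $Z_{\sigma_n+1}-Z_{\sigma_n}\le c/S_{\sigma_n}$ yields $Z_n\le U+c/S_{\sigma_n}$. Since $\{Z^*=1\}$ forces $S_n\to\infty$ (otherwise the urn would freeze at composition $<1$) and in this case $\sigma_n\to\infty$, we obtain $Z^*\le U<1$, a contradiction.

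For (b) $\Rightarrow$ (c), set $Y_n=S_n-S_{n-1}$. Using $E(X_n\mid\mathcal{G}_{n-1})=Z_{n-1}$ and the independence of $B_n$ from $\sigma(\mathcal{G}_{n-1},X_n)$, one computes
\begin{equation*}
E(Y_n\mid\mathcal{G}_{n-1})=E(B_n)\bigl[Z_{n-1}\,I_{\{Z_{n-1}<U\}}+(1-Z_{n-1})\,I_{\{Z_{n-1}>L\}}\bigr].
\end{equation*}
On (b), $Z_{n-1}$ eventually lies in $[a,1-a]$ for some random $a>0$, and a case analysis on the position of $Z_{n-1}$ relative to $(L,U)$ shows the bracket is bounded below by $a$ eventually. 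Combined with $\liminf_n E(B_n)>0$, this gives $\liminf_n E(Y_n\mid\mathcal{G}_{n-1})>0$ a.s. Since $\abs{Y_n}\le c$, the martingale $\sum_k(Y_k-E(Y_k\mid\mathcal{G}_{k-1}))/k$ has summable conditional variances and converges a.s.; Kronecker's lemma then yields $\liminf_n S_n/n>0$ a.s.

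For (c) $\Rightarrow$ (d), note the pointwise estimate $\abs{\Delta_{n+1}}\le B_{n+1}/(S_n+B_{n+1})\le c/S_n$, which is $\mathcal{G}_n$-measurable, so $E(\Delta_{n+1}^2\mid\mathcal{G}_n)\le c^2/S_n^2$. On $\{\liminf_n S_n/n>0\}$ this is eventually at most $c^2/(\varepsilon^2 n^2)$, hence summable, and the standard convergence theorem for $L^2$-martingales with summable conditional variances gives $M_n$ convergent a.s. on this set. The main obstacle is plainly (a) $\Rightarrow$ (b): neither a sub- nor super-martingale property holds globally when both barriers are strictly inside $(0,1)$, so martingale convergence is unavailable directly, and one must rely on the delicate interplay between the one-sided monotonicity of $Z_n$ in $\{Z_n>U\}$ and the $O(1/S_n)$ control on the upward crossing jump across $U$.
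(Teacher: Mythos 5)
Your proposal is correct and follows essentially the same route as the paper's: the same one-sided monotonicity of $Z_n$ outside the barrier interval combined with the $O(1/S_n)$ one-step jump bound for (a)$\Rightarrow$(b) (the paper tracks downcrossings of $L$ to bound $Z_*$ from below, you track the last visit below $U$ to bound $Z^*$ from above --- mirror images under the color swap you invoke), the same compensator-plus-Kronecker argument for (b)$\Rightarrow$(c) resting on $I_{\{Z_{i-1}<U\}}+I_{\{Z_{i-1}>L\}}\ge 1$, and the same summable-conditional-variance estimate $E(\Delta_{n+1}^2\mid\mathcal{G}_n)\le c^2/S_n^2$ for (c)$\Rightarrow$(d). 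I see no gaps.
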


\begin{proof} {\bf ``(a) $\Longrightarrow$ (b)''.} Let $H=\{Z_*=0,\,L>0\}$. On the set $H$, one obtains
\begin{gather*}
\sup_nS_n=\infty,\quad\lim_n(Z_{n+1}-Z_n)=0,\quad Z_n>L\text{ for infinitely many }n.
\end{gather*}
Define $\tau_0=0$ and
\begin{gather*}
\tau_n=\inf\bigl\{k:k>\tau_{n-1},\quad Z_{k-1}>L,\quad Z_k\le L\bigr\}.
\end{gather*}
Then, $\tau_n<\infty$ for all $n$ on $H$. Observe now that $Z_j\ge Z_{j-1}$ whenever $Z_{j-1}\le L$. Hence, $Z_*=\liminf_nZ_n=\liminf_nZ_{\tau_n}$ on $H$, which implies the contradiction
\begin{gather*}
Z_*\ge\liminf_nZ_{\tau_{n-1}}+\liminf_n(Z_{\tau_n}-Z_{\tau_{n-1}})=\liminf_nZ_{\tau_{n-1}}\ge L>0\quad\text{a.s. on }H.
\end{gather*}
Thus, under (a), one obtains $P(Z_*=0)=P(H)=0$. Similarly, $P(Z^*=1)=0$.

{\bf ``(b) $\Longrightarrow$ (c)''.} Define
\begin{gather*}
K_n=\sum_{i=1}^n\,\frac{I_{\{Z_{i-1}<U\}}\bigl[X_iB_i-Z_{i-1}E(B_i)\bigr]+I_{\{Z_{i-1}>L\}}\bigl[(1-X_i)B_i-(1-Z_{i-1})E(B_i)\bigr]}{i}.
\end{gather*}
Since $K_n$ is a $\mathcal{G}$-martingale and $\sup_nE(K_n^2)<\infty$, then $K_n$ converges a.s. Thus, Kronecker lemma implies $(1/n)\,\sum_{i=1}^ni\,K_i\overset{a.s.}\longrightarrow 0$, so that
\begin{gather*}
\liminf_n\frac{S_n}{n}=\liminf_n\,\frac{1}{n}\,\sum_{i=1}^n\Bigl(I_{\{Z_{i-1}<U\}}Z_{i-1}E(B_i)+I_{\{Z_{i-1}>L\}}\,(1-Z_{i-1})E(B_i)\Bigr)\text{ a.s.}
\end{gather*}
Since $I_{\{Z_{i-1}<U\}}+I_{\{Z_{i-1}>L\}}\ge 1$, one finally obtains
\begin{gather*}
\liminf_n\frac{S_n}{n}\ge \bigl\{Z_*\wedge (1-Z^*)\bigr\}\,\liminf_nE(B_n)>0\quad\text{a.s.}
\end{gather*}

{\bf ``(c) $\Longrightarrow$ (d)''.} Since $0\le B_{n+1}\le c$,
\begin{gather*}
E\bigl\{(M_{n+1}-M_n)^2\mid\mathcal{G}_n\bigr\}=E(\Delta_{n+1}^2\mid\mathcal{G}_n)\le E\Bigl(\frac{B_{n+1}^2}{S_n^2}\mid\mathcal{G}_n\Bigr)=\frac{E(B_{n+1}^2)}{S_n^2}\le\frac{c^2}{n^2}\,\frac{1}{(S_n/n)^2}\quad\text{a.s.}
\end{gather*}
Thus, $\sum_nE\bigl\{(M_{n+1}-M_n)^2\mid\mathcal{G}_n\bigr\}<\infty$ a.s. by condition (c). It follows that the $\mathcal{G}$-martingale $M_n$ converges a.s.
\end{proof}

\begin{lem}\label{u8h5rd3}
If $\,\liminf_n(S_n/n)>0$ a.s., then $P(D)=0$ where
\begin{gather*}
D=\bigl\{Z_a\le L\text{ for infinitely many }a\text{ and }Z_b\ge U\text{ for infinitely many }b\bigr\}.
\end{gather*}
\end{lem}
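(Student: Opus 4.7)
The plan is to argue by contradiction: assume $P(D)>0$ and work pointwise on the intersection of $D$ with the two a.s.\ events $\{M_n\text{ converges}\}$ and $\{Z_{n+1}-Z_n\to 0\}$. The first has full probability by Lemma~\ref{b76ty8k9}, since the standing hypothesis is exactly (c). The second follows from the routine bound $|Z_{n+1}-Z_n|\le 2c/S_n$ (both $|Z_nH_n|$ and $|\Delta_{n+1}|$ are controlled by $B_{n+1}/(S_n+B_{n+1})\le c/S_n$) combined with $\liminf_n S_n/n>0$.

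The key structural remark driving the argument is that $H_n=0$ whenever $L<Z_n<U$, because both indicators equal $1$ there. Hence, while $Z_n$ sits inside the open middle region $(L,U)$, one has $Z_{n+1}-Z_n=\Delta_{n+1}$, so $Z$ evolves as a pure $\mathcal{G}$-martingale. Since $M_n=\sum\Delta_i$ converges a.s., any excursion of $Z_n$ through the middle region can only accumulate a vanishingly small net displacement.

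Concretely, I would fix $\epsilon=(U-L)/4$ and, using Cauchy convergence of $M_n$ together with vanishing of the jumps, choose $N_0=N_0(\omega)$ such that $|M_n-M_m|<\epsilon$ and $|Z_{n+1}-Z_n|<\epsilon$ for all $n,m\ge N_0$. Exploiting that on $D$ the events $\{Z_\cdot\le L\}$ and $\{Z_\cdot\ge U\}$ each occur infinitely often, pick $\beta\ge N_0$ with $Z_\beta\ge U$ preceded by some index in $[N_0,\beta-1]$ where $Z_\cdot\le L$, and let $\sigma$ be the largest such index. Set $\rho=\inf\{n>\sigma:Z_n\ge U\}$; then $\sigma<\rho\le\beta$ and $Z_n\in(L,U)$ for every $\sigma<n<\rho$. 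If $\rho=\sigma+1$, then a single jump of size $\ge U-L>\epsilon$ violates the jump bound. Otherwise $\rho\ge\sigma+2$, and telescoping the pure-martingale increments between $\sigma+1$ and $\rho-1$ gives $Z_\rho-Z_{\sigma+1}=M_\rho-M_{\sigma+1}$, whence $|Z_\rho-Z_{\sigma+1}|<\epsilon$; combined with $Z_{\sigma+1}\le Z_\sigma+\epsilon\le L+\epsilon$ this forces $Z_\rho<L+(U-L)/2<U$, contradicting $Z_\rho\ge U$. Either way a contradiction is reached, and hence $P(D)=0$.

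I expect the main conceptual hurdle to be noticing the middle-region martingale structure, namely that $H_n$ vanishes precisely on $\{L<Z_n<U\}$ and so the whole problem reduces to controlling martingale excursions of a known convergent martingale. Once this is in hand, the remaining work is just bookkeeping: selecting $\sigma$ and $\rho$ so that the intermediate segment genuinely lies in $(L,U)$ and the telescoping identity is legitimate.
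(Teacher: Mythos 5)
Your proof is correct and follows essentially the same route as the paper: both isolate an excursion from $\{Z\le L\}$ to $\{Z\ge U\}$, observe that $H_k=0$ on the intermediate region $(L,U)$ so the displacement there is a martingale increment $M_\rho-M_{\sigma+1}$, and contradict $U-L>0$ using convergence of $M_n$ together with the vanishing of the single boundary jump (which the paper packages as the term $Z_{a_n}H_{a_n}\to 0$ rather than via your explicit bound $|Z_{n+1}-Z_n|\le 2c/S_n$). The only differences are presentational: you run a quantitative $\epsilon=(U-L)/4$ argument on one excursion, while the paper lets $n\to\infty$ along the whole sequence of excursions.
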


\begin{proof}
On $D$, there is a sequence $(a_n,b_n)$ such that $a_1<b_1<a_2<b_2<\ldots$ and
\begin{gather*}
Z_{a_n}\le L,\quad Z_{b_n}\ge U,\quad L<Z_k<U\text{ for each }a_n<k<b_n.
\end{gather*}
Since $H_k=0$ if $a_n<k<b_n$, then
\begin{gather*}
U-L\le Z_{b_n}-Z_{a_n}=\sum_{k=a_n}^{b_n-1}(Z_{k+1}-Z_k)=\sum_{k=a_n}^{b_n-1}(Z_kH_k+\Delta_{k+1})=Z_{a_n}H_{a_n}+M_{b_n}-M_{a_n}.
\end{gather*}
Since $\liminf_n(S_n/n)>0$ a.s., then $\sup_nS_n=\infty$ a.s., which implies $H_n\overset{a.s.}\longrightarrow 0$. Also, by Lemma \ref{b76ty8k9}, $M_n$ converges a.s. Hence, taking the limit as $n\rightarrow\infty$, one obtains $U-L\le 0$ a.s. on $D$. Therefore, $P(D)=0$.
\end{proof}

We are now ready to prove a.s. convergence of $Z_n$.

Since $Z_n$ converges a.s. on the set $\{L=0\}\cup\{U=1\}$, it can be assumed $P(0<L<U<1)>0$. In turn, up to replacing $P$ with $P(\cdot\mid 0<L<U<1)$, it can be assumed $0<L<U<1$ a.s. Then, Lemmas \ref{b76ty8k9}-\ref{u8h5rd3} imply $P(D^c)=1$ and a.s. convergence of $M_n$. Write
\begin{gather*}
Z_n-Z_0=\sum_{i=0}^{n-1}\,(Z_{i+1}-Z_i)=\sum_{i=0}^{n-1}Z_iH_i+M_n=K_n+M_n
\end{gather*}
where $K_n=\sum_{i=0}^{n-1}Z_iH_i$. On the set $D^c$, one has either $Z_iH_i\ge 0$ eventually or $Z_iH_i\le 0$ eventually. Hence, on $D^c$, the sequence $K_n$ converges if and only if it is bounded. But $K_n$ is a.s. bounded, since $\abs{K_n}\le 1+\sup_k\abs{M_k}$ and $M_n$ converges a.s. Thus, $Z_n$ converges a.s. on $D^c$. This proves a.s. convergence of $Z_n$ for $P(D^c)=1$.

Let $Z$ denote the a.s. limit of $Z_n$. Since $Z_n\overset{a.s.}\longrightarrow 1$ on the set $\{Z<L\}$ and $Z_n\overset{a.s.}\longrightarrow 0$ on the set $\{Z>U\}$, then $L\le Z\le U$ a.s.

It remains to see that
\begin{gather*}
P(Z=0)=P(Z=1)=0.
\end{gather*}
We just prove $P(Z=1)=0$. The proof of $P(Z=0)=0$ is quite analogous.

Since $Z\le U\le 1$ a.s., then $P(Z=1)\le P(U=1)$. Thus, it can be assumed $P(U=1)>0$. In turn, up to replacing $P$ with $P(\cdot\mid U=1)$, it can be assumed $U=1$ everywhere. Then, $Z_n$ is a $\mathcal{G}$-sub-martingale, so that
\begin{gather*}
Y_n=Z_n/(1-Z_n)
\end{gather*}
is still a $\mathcal{G}$-sub-martingale. Let $H=\bigl\{\,\sum_nE\bigl\{Y_{n+1}-Y_n\mid\mathcal{G}_n\bigr\}<\infty\bigr\}$. Since $Y_n$ is a positive $\mathcal{G}$-sub-martingale, $Y_n$ converges a.s. (to a real random variable) on the set $H$. Thus, to get $P(Z=1)=0$, it suffices to show that
\begin{gather}\label{ne4veu9}
\sum_nE\bigl\{Y_{n+1}-Y_n\mid\mathcal{G}_n\bigr\}<\infty\quad\text{a.s. on the set }\{Z=1\}.
\end{gather}

To prove \eqref{ne4veu9}, let
\begin{gather*}
J_n=b+\sum_{i=1}^nX_i\,B_i\quad\text{and}\quad L_n=S_n-J_n=r+\sum_{i=1}^n(1-X_i)\,B_i\,I_{\{Z_{i-1}>L\}}
\end{gather*}
be the numbers of black balls and red balls, respectively, in the urn at time $n$ (recall that $U=1$, so that $Z_{i-1}<U$ is automatically true). On noting that $Y_n=J_n/L_n$, one obtains
\begin{gather*}
E\bigl\{Y_{n+1}-Y_n\mid\mathcal{G}_n\bigr\}=-Y_n+E\Bigl\{\frac{J_n+B_{n+1}}{L_n}\,X_{n+1}\,+\,\frac{J_n}{L_n+I_{\{Z_n>L\}}B_{n+1}}\,(1-X_{n+1})\mid\mathcal{G}_n\Bigr\}
\\=-Y_n+Z_n\,E\Bigl\{\frac{J_n+B_{n+1}}{L_n}\mid\mathcal{G}_n\Bigr\}+(1-Z_n)\,E\Bigl\{\frac{J_n}{L_n+I_{\{Z_n>L\}}B_{n+1}}\mid\mathcal{G}_n\Bigr\}
\\=-Y_n\,(1-Z_n)+\frac{Z_n\,E(B_{n+1})}{L_n}+Y_n\,(1-Z_n)\,E\Bigl\{\frac{L_n}{L_n+I_{\{Z_n>L\}}B_{n+1}}\mid\mathcal{G}_n\Bigr\}
\\=\frac{Z_n\,E(B_{n+1})}{L_n}-Z_n\,I_{\{Z_n>L\}}\,E\Bigl\{\frac{B_{n+1}}{L_n+I_{\{Z_n>L\}}B_{n+1}}\mid\mathcal{G}_n\Bigr\}
\\\le\frac{Z_n\,E(B_{n+1})}{L_n}-Z_n\,I_{\{Z_n>L\}}\,E\Bigl\{\frac{B_{n+1}}{L_n+c}\mid\mathcal{G}_n\Bigr\}
\\=\frac{Z_n\,E(B_{n+1})}{L_n}-Z_n\,I_{\{Z_n>L\}}\,\frac{E(B_{n+1})}{L_n+c}\quad\text{a.s.}
\end{gather*}
Since $Z_n\overset{a.s.}\longrightarrow Z$, then $Z_n>L$ eventually on the set $\{Z=1\}$. Hence,
\begin{gather*}
E\bigl\{Y_{n+1}-Y_n\mid\mathcal{G}_n\bigr\}\le Z_n\,E(B_{n+1})\,\Bigl(\frac{1}{L_n}-\frac{1}{L_n+c}\Bigr)\le\frac{c^2}{L_n^2}\quad\text{eventually on }\{Z=1\}.
\end{gather*}

Next, given $k\in (1,2)$, it is not hard to see that
\begin{gather*}
E\Bigl\{\frac{J_{n+1}}{L_{n+1}^k}-\frac{J_n}{L_n^k}\mid\mathcal{G}_n\Bigr\}\le 0\quad\text{eventually on }\{Z=1\}.
\end{gather*}
We omit the calculations for they exactly agree with those for proving \cite[Lemma A.1(ii)]{MF}. Thus, the sequence $J_n/L_n^k$ converges a.s. on $\{Z=1\}$. Furthermore, independence of the $B_n$ yields
\begin{gather*}
\liminf_n\frac{J_n}{n}=\liminf_n\frac{\sum_{i=1}^nB_i}{n}\,\frac{S_n}{\sum_{i=1}^nB_i}\,Z_n=\liminf_n\frac{\sum_{i=1}^nB_i}{n}
\\=\liminf_n\frac{\sum_{i=1}^nE(B_i)}{n}\ge\liminf_nE(B_n)>0\quad\text{a.s. on }\{Z=1\}.
\end{gather*}
Given any $\gamma<1$, it follows that
\begin{gather*}
\frac{n^\gamma}{L_n^k}=\frac{n^\gamma}{n}\,\frac{n}{J_n}\,\frac{J_n}{L_n^k}\overset{a.s.}\longrightarrow 0\,\text{ a.s. on }\,\{Z=1\}.
\end{gather*}
Thus,
\begin{gather*}
L_n>n^{\gamma/k}\,\text{ eventually on }\,\{Z=1\}.
\end{gather*}
Since $k<2$, one can take $\gamma<1$ such that $\gamma/k>1/2$. Therefore, condition \eqref{ne4veu9} holds, and this concludes the proof of Theorem \ref{t1}.

\vspace{0.4cm}

\subsection{Proof of Theorem \ref{t2}}
In this subsection, it is assumed that
\begin{gather*}
m:=\lim_nE(B_n)>0\quad\text{and}\quad q:=\lim_nE(B_n^2).
\end{gather*}
By Theorem \ref{t1}, $Z_n\overset{a.s.}\longrightarrow Z$ for some random variable $Z$ such that $L\le Z\le U$ and $0<Z<1$ a.s.

On noting that
 \begin{gather*}
0<Z_*=Z=Z^*<1\quad\text{a.s.,}
\end{gather*}
the same argument used after Lemma \ref{u8h5rd3} yields $\sum_nZ_n\abs{H_n}<\infty$ a.s. Since $Z>0$ a.s., it follows that
\begin{gather}\label{ur69kh7}
\sum_n\abs{H_n}<\infty\quad\text{a.s.}
\end{gather}
Define
\begin{gather*}
T_n=\prod_{i=1}^{n-1}(1+H_i)\quad\text{and}\quad W_n=\frac{Z_n}{T_n}.
\end{gather*}
Condition \eqref{ur69kh7} implies $T_n\overset{a.s.}\longrightarrow T$, for some real random variable $T>0$, so that
\begin{gather*}
W_n\overset{a.s.}\longrightarrow Z/T:=W.
\end{gather*}

Our next goal is to show that $\sqrt{n}\,(W_n-W)$ converges conditionally a.s. To this end, we first fix the asymptotic behavior of $S_n$.

\begin{lem} $S_n/n\overset{a.s.}\longrightarrow m$.
\end{lem}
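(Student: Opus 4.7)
The plan is to express $S_n$ as a martingale plus a predictable drift, apply Kronecker's lemma to kill the martingale after normalization by $n$, and then identify the Cesaro limit of the drift. Concretely, setting $Y_i = B_i\bigl[X_i\,I_{\{Z_{i-1}<U\}} + (1-X_i)\,I_{\{Z_{i-1}>L\}}\bigr]$, one has $S_n = b+r+\sum_{i=1}^n Y_i$, and $E(X_i\mid\mathcal{G}_{i-1})=Z_{i-1}$ combined with the independence of $B_i$ from $\sigma(\mathcal{G}_{i-1},X_i)$ gives $E(Y_i\mid\mathcal{G}_{i-1}) = E(B_i)\,\phi_{i-1}$, where
\[
\phi_n := Z_n\,I_{\{Z_n<U\}} + (1-Z_n)\,I_{\{Z_n>L\}}.
\]
Since $|Y_i|\le c$, the martingale $M_n := \sum_{i=1}^n \bigl(Y_i - E(Y_i\mid\mathcal{G}_{i-1})\bigr)/i$ is $L^2$-bounded and hence converges a.s.; Kronecker's lemma then yields
\[
\frac{S_n}{n} \,=\, \frac{1}{n}\sum_{i=1}^n E(B_i)\,\phi_{i-1} + o(1) \quad\text{a.s.}
\]
Since $E(B_i)\to m$ and $\phi_{i-1}\in[0,1]$, a Cesaro argument on $|E(B_i)-m|$ reduces matters to showing $\frac{1}{n}\sum_{i=1}^n \phi_{i-1}\to 1$ a.s.

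Because $\phi_n=1$ exactly on $\{L<Z_n<U\}$ and $\phi_n\le 1$ always, this in turn reduces to showing that $\mathcal{T}:=\{n:Z_n\notin(L,U)\}$ has density $0$ in the positive integers, a.s. Here I invoke the already-established $\sum_n\abs{H_n}<\infty$ a.s. On $\{Z_n\le L\}$ with $L>0$, the bounds $S_n\le(b+r)+cn$ and $1-Z_n\ge 1-L$ give $\abs{H_n}\ge c_1(1-L)B_{n+1}/n$ for large $n$; on $\{Z_n\ge U\}$ with $U<1$, since $Z_n\to Z\le U<1$ makes $1-Z_n\ge(1-U)/2$ eventually, $\abs{H_n}\ge c_2 B_{n+1}/n$ eventually. (The cases $L=0$ and $U=1$ are trivial, since $0<Z_n<1$ always.) Summing, $\sum_n B_{n+1}\,I_{\{Z_n\in\mathcal{T}\}}/n < \infty$ a.s. The $L^2$-bounded martingale $\sum_k (B_{k+1}-E(B_{k+1}))\,I_{\{Z_k\in\mathcal{T}\}}/k$ converges a.s., so one upgrades to $\sum_n E(B_{n+1})\,I_{\{Z_n\in\mathcal{T}\}}/n < \infty$ a.s.; since $E(B_n)\ge m/2$ eventually, $\sum_n I_{\{Z_n\in\mathcal{T}\}}/n < \infty$ a.s. A standard dyadic block estimate then forces $\mathcal{T}$ to have density $0$ a.s., completing the proof.

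The main obstacle is that we cannot yet invoke $L<Z<U$ a.s., since that is part of Theorem~\ref{t2}'s conclusion and will only be proved later. We therefore cannot simply assert $Z_n\in(L,U)$ eventually; the best we can do at this stage is a density-$0$ estimate. The leverage that makes this work is precisely the bound $\sum_n\abs{H_n}<\infty$ established just before the lemma, which quantifies how sparsely $Z_n$ can visit $\mathcal{T}$.
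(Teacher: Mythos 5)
Your proof is correct, and it rests on the same essential leverage as the paper's --- the summability $\sum_n\abs{H_n}<\infty$ established just before the lemma, together with the observation that on $\{Z_n\le L\}\cup\{Z_n\ge U\}$ one has $\abs{H_n}\gtrsim B_{n+1}/S_n\gtrsim B_{n+1}/n$ --- but the surrounding decomposition is genuinely different. The paper never passes to conditional expectations: it writes $S_n=b+r+\sum_{i=1}^nB_i-Q_n$, where $Q_n=\sum_{i=1}^nB_i\bigl[X_iI_{\{Z_{i-1}\ge U\}}+(1-X_i)I_{\{Z_{i-1}\le L\}}\bigr]$ is the total reinforcement lost to the barriers, gets $\frac1n\sum_iB_i\to m$ from the strong law for the independent bounded $B_i$, and kills $Q_n/S_n$ directly by bounding the summands by $\frac{2\abs{H_{i-1}}}{1-Z}(S_{i-1}+B_i)$ and applying Kronecker with the weights $S_{i-1}$. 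You instead compensate, reducing to $\frac1n\sum_i\phi_{i-1}\to 1$ and hence to the density-zero statement for $\mathcal{T}=\{n:Z_n\notin(L,U)\}$, which costs you two extra steps the paper avoids (the auxiliary martingale upgrading $\sum_{n\in\mathcal{T}}B_{n+1}/n<\infty$ to $\sum_{n\in\mathcal{T}}1/n<\infty$, and the dyadic-block deduction of density zero). What your route buys is a slightly more structural statement --- the set of times at which a barrier is active has density zero, which is of independent interest and isolates exactly why the truncation is asymptotically invisible --- whereas the paper's pathwise bound on $Q_n/S_n$ is shorter and reuses the same Abel/Kronecker computation that reappears later in the proof of Lemma \ref{pierc3l9}. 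All the individual steps you give (the identity $E(Y_i\mid\mathcal{G}_{i-1})=E(B_i)\phi_{i-1}$, the case analysis on $\{L=0\}$ and $\{U=1\}$, the lower bound $1-Z_n\ge(1-U)/2$ eventually on $\{U<1\}$) check out; only note that $I_{\{Z_k\in\mathcal{T}\}}$ should read $I_{\{k\in\mathcal{T}\}}$, since $\mathcal{T}$ is a set of indices.
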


\begin{proof}
Let $Q_n=\sum_{i=1}^nB_i\,\bigl[X_iI_{\{Z_{i-1}\ge U\}}+(1-X_i)I_{\{Z_{i-1}\le L\}}\bigr]$. Since $Z_i\overset{a.s.}\longrightarrow Z$, then $1-Z_i>(1-Z)/2$ eventually. Moreover, $I_{\{Z_{i-1}\ge U\}}+I_{\{Z_{i-1}\le L\}}=\abs{I_{\{Z_{i-1}<U\}}-I_{\{Z_{i-1}>L\}}}$. Therefore,
\begin{gather*}
B_i\,\bigl[X_iI_{\{Z_{i-1}\ge U\}}+(1-X_i)I_{\{Z_{i-1}\le L\}}\bigr]\le B_i\,\abs{I_{\{Z_{i-1}< U\}}-I_{\{Z_{i-1}> L\}}}
\\=\abs{H_{i-1}}\,\frac{S_{i-1}+B_i}{1-Z_{i-1}}\le\frac{2\,\abs{H_{i-1}}}{1-Z}\,(S_{i-1}+B_i)\quad\text{eventually.}
\end{gather*}
By condition \eqref{ur69kh7} and Kronecker lemma,
\begin{gather*}
\frac{Q_n}{S_n}\le\frac{2}{1-Z}\,\frac{1}{S_n}\,\sum_{i=1}^n\abs{H_{i-1}}S_{i-1}\,+\,\frac{2\,c}{1-Z}\,\frac{1}{S_n}\,\sum_{i=1}^n\abs{H_{i-1}}\overset{a.s.}\longrightarrow 0.
\end{gather*}
Hence,
\begin{gather*}
\frac{Q_n}{n}=\frac{Q_n}{S_n}\,\frac{S_n}{n}\le\frac{Q_n}{S_n}\,\,\frac{r+b+nc}{n}\overset{a.s.}\longrightarrow 0.
\end{gather*}
On noting that $(1/n)\,\sum_{i=1}^nB_i\overset{a.s.}\longrightarrow m$, one finally obtains
\begin{gather*}
\frac{S_n}{n}=\frac{r+b}{n}-\frac{Q_n}{n}+\frac{\sum_{i=1}^nB_i}{n}\overset{a.s.}\longrightarrow m.
\end{gather*}
\end{proof}

In view of the next lemma, we recall that
\begin{gather*}
\sigma^2=q\,Z\,(1-Z)/m^2.
\end{gather*}

\begin{lem}\label{pierc3l9}
\begin{gather*}
\sqrt{n}\,(W_n-W)\longrightarrow\mathcal{N}\bigl(0,\,\sigma^2/T^2\bigr)\quad\text{conditionally a.s. with respect to }\mathcal{G}.
\end{gather*}
\end{lem}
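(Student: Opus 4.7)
The plan is to verify hypotheses (i)--(iii) of Lemma \ref{f78j2nhj7t} for $(W_n)$ with filtration $\mathcal{G}$, aiming for the limiting variance $\sigma^2/T^2$. The workhorse is the identity $W_{n+1}-W_n=\Delta_{n+1}/T_{n+1}$, obtained immediately from $Z_{n+1}=Z_n(1+H_n)+\Delta_{n+1}$ and $T_{n+1}=T_n(1+H_n)$. To show $(W_n)$ is a $\mathcal{G}$-martingale, I note that $T_{n+1}$ depends on $X_{n+1}$ only through $B_{n+1}$, so conditioning on $(\mathcal{G}_n,B_{n+1})$ and using that $B_{n+1}$ is independent of $X_{n+1}$ given $\mathcal{G}_n$ together with $E(X_{n+1}\mid\mathcal{G}_n)=Z_n$ yields $E(\Delta_{n+1}\mid\mathcal{G}_n,B_{n+1})=0$ and hence $E(W_{n+1}-W_n\mid\mathcal{G}_n)=0$. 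Uniform integrability (condition (i)) follows from $0\le W_n\le 1/\inf_k T_k$, which is a.s.\ finite since $T_k\to T>0$; the required $L^1$-dominance can be enforced by localizing on $\{\inf_k T_k>\varepsilon\}$ via Lemma \ref{k9cxs34} and letting $\varepsilon\downarrow 0$ at the end.

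For condition (ii), the bound
\begin{gather*}
\sqrt{n}\,\abs{W_n-W_{n-1}}=\sqrt{n}\,\abs{\Delta_n}/T_n\le c\sqrt{n}/(S_{n-1}T_n)
\end{gather*}
combined with $S_n/n\to m>0$ (the preceding lemma) and $T_n\to T>0$ a.s.\ shows that $\sqrt{n}\,\abs{W_n-W_{n-1}}=O(1/\sqrt{n})$ eventually, so $\sup_n\sqrt{n}\,\abs{W_n-W_{n-1}}$ is a.s.\ finite; $L^1$-boundedness is handled again by the same localization.

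Condition (iii) is the heart of the argument. I apply Lemma \ref{cx3ej876zq1} with $Y_k=k^2\Delta_k^2/T_k^2$ so that $Y_k/k^2=(W_k-W_{k-1})^2$. Conditional independence of $X_{k+1}$ and $B_{k+1}$ given $\mathcal{G}_k$ gives
\begin{gather*}
E(\Delta_{k+1}^2\mid\mathcal{G}_k,B_{k+1})=\frac{B_{k+1}^2\,Z_k(1-Z_k)}{(S_k+B_{k+1})^2}\bigl[(1-Z_k)I_{\{Z_k<U\}}+Z_kI_{\{Z_k>L\}}\bigr]^2.
\end{gather*}
Using the cancellation $(S_k+B_{k+1})(1+H_k)=S_k+B_{k+1}$ whenever $L<Z_k<U$, together with $S_k/k\to m$, $E(B_{k+1}^2)\to q$, $Z_k\to Z$, $T_k\to T$, and the fact that on $\{L<Z<U\}$ one has $L<Z_k<U$ eventually so the bracket tends to $1$, I obtain
\begin{gather*}
E(Y_{k+1}\mid\mathcal{G}_k)\overset{a.s.}\longrightarrow q\,Z(1-Z)/(m^2T^2)=\sigma^2/T^2.
\end{gather*}
The moment bound $\sum k^{-2}E(Y_k^2)<\infty$ needed for Lemma \ref{cx3ej876zq1} again requires control of $1/T_k$, which is supplied by the same localization device.

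The principal obstacle is precisely this quadratic variation step on boundary events: on $\{Z=L\}$ and $\{Z=U\}$ the indicators $I_{\{Z_k<U\}}$ and $I_{\{Z_k>L\}}$ need not stabilize, and the bracket $[(1-Z_k)I_{\{Z_k<U\}}+Z_kI_{\{Z_k>L\}}]^2$ could oscillate between $(1-L)^2$ (resp.\ $U^2$) and $1$, so $E(Y_{k+1}\mid\mathcal{G}_k)$ may not converge to the target $\sigma^2/T^2$ on such events. To cover them one must either carry out a finer pathwise analysis of how $Z_k$ approaches the barriers, or anticipate the fact (part of the conclusion of Theorem \ref{t2}) that $P(\{Z=L\}\cup\{Z=U\})=0$. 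A secondary technical issue, the integrability of inverse powers of $T_k$, is resolved by the localization afforded by Lemma \ref{k9cxs34}.
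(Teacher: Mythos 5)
Your overall strategy (apply Lemma \ref{f78j2nhj7t} to the $\mathcal{G}$-martingale $W_n$ with increments $\Delta_{n+1}/T_{n+1}$, obtaining condition (iii) from Lemma \ref{cx3ej876zq1}) is the same as the paper's, but there is a genuine gap at exactly the point you call ``the principal obstacle'', namely the quadratic variation on $\{Z=L\}\cup\{Z=U\}$, and neither of your suggested remedies works. Anticipating $L<Z<U$ a.s.\ is circular: in the paper that fact (Lemma \ref{j6re4}) is \emph{deduced from} the present conditional CLT, so it cannot be assumed here; and no ``finer pathwise analysis'' is actually supplied. The paper's resolution does not need $L<Z<U$: it shows the boundary times contribute nothing to $n\sum_{k\ge n}\Delta_k^2$, via the identity
\begin{gather*}
\frac{B_k}{S_{k-1}+B_k}\,\bigl(I_{\{Z_{k-1}\ge U\}}+I_{\{Z_{k-1}\le L\}}\bigr)=\frac{\abs{H_{k-1}}}{1-Z_{k-1}},
\end{gather*}
which, combined with $\sum_k\abs{H_k}<\infty$ a.s.\ (established at the start of the proof of Theorem \ref{t2}) and Abel summation, gives $n\sum_{k\ge n}\frac{B_k^2}{(S_{k-1}+B_k)^2}\bigl(I_{\{Z_{k-1}\ge U\}}+I_{\{Z_{k-1}\le L\}}\bigr)\overset{a.s.}\longrightarrow 0$. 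Since the discrepancy between $\Delta_k^2$ and $\frac{B_k^2}{(S_{k-1}+B_k)^2}(X_k-Z_{k-1})^2$ is dominated by precisely this quantity, the oscillating bracket is harmless on every event, and one is reduced to the clean computation $E(V_{n+1}\mid\mathcal{G}_n)\overset{a.s.}\longrightarrow\sigma^2$ that you already have.

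The secondary issue, integrability of inverse powers of $T_k$ and of $S_k$, is also not settled by your localization. Lemma \ref{k9cxs34} transfers almost sure conditional convergence from $P$ to $Q\ll P$, the opposite direction to the one you need; more seriously, conditioning on $\{\inf_kT_k>\varepsilon\}$, which is not $\mathcal{G}_n$-measurable for any $n$, destroys the martingale property of $(W_n)$, so hypothesis (i) of Lemma \ref{f78j2nhj7t} is no longer available under the conditioned measure. The paper truncates inside the martingale instead of conditioning: it works with $W_n^{(\epsilon)}=Z_1+\sum_{i=1}^{n-1}\Delta_{i+1}I_{A_i}/(\epsilon\vee T_{i+1})$ with $A_i=\{2S_i>im\}$, which is still a $\mathcal{G}$-martingale, admits the deterministic bound $I_{A_i}/S_i\le 2/(im)$ so that (i), (ii) and the moment condition of Lemma \ref{cx3ej876zq1} become routine, and satisfies $\sqrt{n}(W_n-W)=\sqrt{n}\bigl(W_n^{(\epsilon)}-W^{(\epsilon)}\bigr)$ eventually on $\{T>\epsilon\}$, after which letting $\epsilon\downarrow 0$ finishes the proof. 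You need to replace both of your patches by these two devices.
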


\begin{proof}
First note that $W_n$ can be written as
\begin{gather*}
W_n=Z_1+\sum_{i=1}^{n-1}\frac{\Delta_{i+1}}{T_{i+1}}.
\end{gather*}
Thus, $W_n$ is a $\mathcal{G}$-martingale and the obvious strategy would be applying Lemma \ref{f78j2nhj7t} to $Y_n=W_n$. However, conditions (i)-(ii)-(iii) are not easy to check with $Y_n=W_n$. Accordingly, we adopt an approximation procedure.

Given $\epsilon>0$, define
\begin{gather*}
W_n^{(\epsilon)}=Z_1+\sum_{i=1}^{n-1}\frac{\Delta_{i+1}\,I_{A_i}}{\epsilon\vee T_{i+1}}\quad\text{where }A_i=\{2\,S_i>i\,m\}.
\end{gather*}
For fixed $\epsilon>0$, $W_n^{(\epsilon)}$ is still a $\mathcal{G}$-martingale and
\begin{gather*}
\sup_nE\bigl\{(W_n^{(\epsilon)})^2\bigr\}\le 1+\frac{1}{\epsilon^2}\,\sum_{i=1}^\infty E\bigl\{\Delta_{i+1}^2\,I_{A_i}\bigr\}\le 1+\frac{c^2}{\epsilon^2}\,\sum_{i=1}^\infty E\Bigl\{\frac{I_{A_i}}{S_i^2}\Bigr\}
\le 1+\Bigl(\frac{2\,c}{m\,\epsilon}\Bigr)^2\,\sum_{i=1}^\infty\frac{1}{i^2}.
\end{gather*}
Hence, $W_n^{(\epsilon)}\overset{a.s.}\longrightarrow W^{(\epsilon)}$ for some random variable $W^{(\epsilon)}$. Since $S_n/n\overset{a.s.}\longrightarrow m
$, the events $A_n$ are eventually true, so that
\begin{gather*}
W-W_n=\sum_{i\ge n}\frac{\Delta_{i+1}}{T_{i+1}}=\sum_{i\ge n}\frac{\Delta_{i+1}\,I_{A_i}}{\epsilon\vee T_{i+1}}=W^{(\epsilon)}-W_n^{(\epsilon)}\quad\text{eventually on }\{T>\epsilon\}.
\end{gather*}
Therefore, it suffices to show that, for fixed $\epsilon>0$,
\begin{gather*}
\sqrt{n}\,\bigl\{W_n^{(\epsilon)}-W^{(\epsilon)}\bigr\}\longrightarrow\mathcal{N}\bigl(0,\,\sigma^2/(\epsilon\vee T)^2\bigr)\quad\text{conditionally a.s. with respect to }\mathcal{G}.
\end{gather*}
In turn, since  $W_n^{(\epsilon)}$ is a uniformly integrable $\mathcal{G}$-martingale, it suffices to check conditions (ii)-(iii) of Lemma \ref{f78j2nhj7t} with $Y_n=W_n^{(\epsilon)}$ and $U=\sigma^2/(\epsilon\vee T)^2$. As to (ii),
\begin{gather*}
E\Bigl\{\bigl(\sup_n\sqrt{n}\,\abs{W_n^{(\epsilon)}-W_{n-1}^{(\epsilon)}}\bigr)^4\Bigr\}\le\sum_nn^2E\bigl\{(W_n^{(\epsilon)}-W_{n-1}^{(\epsilon)})^4\bigr\}
\\\le\frac{1}{\epsilon^4}\,\sum_nn^2E\bigl\{\Delta_n^4\,I_{A_{n-1}}\bigr\}\le\frac{c^4}{\epsilon^4}\,\sum_nn^2E\Bigl\{\frac{I_{A_{n-1}}}{S_{n-1}^4}\Bigr\}\le\Bigl(\frac{2\,c}{m\,\epsilon}\Bigr)^4\sum_n\frac{n^2}{(n-1)^4}<\infty.
\end{gather*}

We next turn to condition (iii). We have to prove
\begin{gather*}
n\sum_{k\ge n}\bigl(W_k^{(\epsilon)}-W_{k-1}^{(\epsilon)}\big)^2=n\sum_{k\ge n}\frac{I_{A_{k-1}}\,\Delta_k^2}{(\epsilon\vee T_k)^2}\overset{a.s.}\longrightarrow\sigma^2/(\epsilon\vee T)^2.
\end{gather*}
Since $T_k\overset{a.s.}\longrightarrow T$, the above condition reduces to
\begin{gather}\label{capp8uc4}
n\sum_{k\ge n}I_{A_{k-1}}\,\Delta_k^2\overset{a.s.}\longrightarrow\sigma^2.
\end{gather}
Since $1-Z_k>(1-Z)/2$ eventually and $\sum_k\abs{H_k}<\infty$ a.s., Abel summation formula yields
\begin{gather*}
n\sum_{k\ge n}I_{A_{k-1}}\,\frac{B_k^2}{(S_{k-1}+B_k)^2}\,\Bigl(I_{\{Z_{k-1}\ge U\}}+I_{\{Z_{k-1}\le L\}}\Bigr)
\\= n\sum_{k\ge n}I_{A_{k-1}}\,\frac{B_k}{S_{k-1}+B_k}\,\frac{\abs{H_{k-1}}}{1-Z_{k-1}}
\\\le\frac{2\,c\,n}{1-Z}\,\sum_{k\ge n}I_{A_{k-1}}\,\frac{\abs{H_{k-1}}}{S_{k-1}}\le\frac{4\,c\,n}{m(1-Z)}\,\sum_{k\ge n}\frac{\abs{H_{k-1}}}{{k-1}}
\overset{a.s.}\longrightarrow 0.
\end{gather*}
Hence, to get \eqref{capp8uc4}, it suffices to prove that
\begin{gather*}
n\,\sum_{k\ge n}I_{A_{k-1}}\,\frac{B_k^2}{(S_{k-1}+B_k)^2}\,(X_k-Z_{k-1})^2\overset{a.s.}\longrightarrow\sigma^2.
\end{gather*}
Finally, such condition follows from Lemma \ref{cx3ej876zq1} if $E(V_{n+1}\mid\mathcal{G}_n)\overset{a.s.}\longrightarrow\sigma^2$, where
\begin{gather*}
V_n=n^2I_{A_{n-1}}\,\frac{B_n^2}{(S_{n-1}+B_n)^2}\,(X_n-Z_{n-1})^2.
\end{gather*}
In fact,
\begin{gather*}
E(V_{n+1}\mid\mathcal{G}_n)=I_{A_n}\,(n+1)^2E\Bigl\{\frac{B_{n+1}^2}{(S_n+B_{n+1})^2}\,(X_{n+1}-Z_n)^2\mid\mathcal{G}_n\Bigr\}\le (n+1)^2E\Bigl\{\frac{B_{n+1}^2}{S_n^2}\,(X_{n+1}-Z_n)^2\mid\mathcal{G}_n\Bigr\}
\\=\frac{(n+1)^2}{S_n^2}\,E\bigl\{B_{n+1}^2\,(X_{n+1}-Z_n)^2\mid\mathcal{G}_n\bigr\}=\frac{(n+1)^2}{S_n^2}\,E(B_{n+1}^2)\,E\bigl\{(X_{n+1}-Z_n)^2\mid\mathcal{G}_n\bigr\}
\\=\frac{(n+1)^2}{S_n^2}\,E(B_{n+1}^2)\,Z_n\,(1-Z_n)\overset{a.s.}\longrightarrow\frac{q\,Z\,(1-Z)}{m^2}=\sigma^2.
\end{gather*}
Since the events $A_n$ are eventually true, one similarly obtains
\begin{gather*}
E(V_{n+1}\mid\mathcal{G}_n)\ge I_{A_n}\,(n+1)^2E\Bigl\{\frac{B_{n+1}^2}{(S_n+c)^2}\,(X_{n+1}-Z_n)^2\mid\mathcal{G}_n\Bigr\}
\\=I_{A_n}\,\frac{(n+1)^2}{(S_n+c)^2}\,E\bigl\{B_{n+1}^2\,(X_{n+1}-Z_n)^2\mid\mathcal{G}_n\bigr\}\overset{a.s.}\longrightarrow\sigma^2.
\end{gather*}
Hence, $E(V_{n+1}\mid\mathcal{G}_n)\overset{a.s.}\longrightarrow\sigma^2$. This proves condition \eqref{capp8uc4} and concludes the proof of the lemma.
\end{proof}

Theorem \ref{t2} is a quick consequence of Lemma \ref{pierc3l9}. Define in fact
\begin{gather*}
D_n=\sqrt{n}\,(Z_n-Z)\quad\text{and}\quad F_n=\prod_{i=n}^\infty(1+H_i).
\end{gather*}
Because of Lemma \ref{pierc3l9},
\begin{gather*}
\sqrt{n}\,\bigl(F_n\,Z_n-Z\bigr)=T\,\sqrt{n}\,(W_n-W)\longrightarrow\mathcal{N}(0,\,\sigma^2)
\end{gather*}
conditionally a.s. with respect to $\mathcal{G}$.

If $L<Z<U$ a.s., then $L<Z_n<U$ eventually, which in turn implies $F_n=1$ and $D_n=\sqrt{n}\,\bigl(F_n\,Z_n-Z\bigr)$ eventually. Thus $D_n\longrightarrow\mathcal{N}(0,\,\sigma^2)$, conditionally a.s. with respect to $\mathcal{G}$, provided $L<Z<U$ a.s.

\begin{lem}\label{j6re4}
$P(L<Z<U)=1$.
\end{lem}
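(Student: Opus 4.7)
By symmetry it suffices to prove $P(Z=U)=0$; the plan is to argue by contradiction. Assuming $P(Z=U)>0$, I first observe that $0<Z<1$ a.s.\ (Theorem~\ref{t1}) together with $q\ge m^2>0$ yields $\sigma^2=qZ(1-Z)/m^2>0$ a.s.\ on $\{Z=U\}$; and because $Z_n\to U>L$, the indicator $I_{\{Z_n>L\}}$ is eventually $1$, so $H_n\le 0$ and $F_n=\prod_{i\ge n}(1+H_i)\le 1$ eventually on $\{Z=U\}$, giving $F_nZ_n\le Z_n$ eventually. I plan to partition $\{Z=U\}$ into three events and reach a contradiction on each, using the conditional CLT $\sqrt{n}(F_nZ_n-Z)\to\mathcal{N}(0,\sigma^2)$ from Lemma~\ref{pierc3l9} together with the stable-convergence statement of Lemma~\ref{m6c8}.

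The first event is $H_A=\{Z=U,\,F_nZ_n\le U\text{ eventually}\}$: on $H_A$ the variable $\sqrt{n}(F_nZ_n-Z)$ is eventually non-positive, so dominated convergence forces $P(\sqrt{n}(F_nZ_n-Z)>\epsilon,H_A)\to 0$, whereas Lemma~\ref{m6c8} forces the same probability to converge to $\int_{H_A}\overline{\Phi}(\epsilon/\sigma)\,dP$, which is strictly positive if $P(H_A)>0$. Hence $P(H_A)=0$, and $F_nZ_n>U$ (and therefore $Z_n>U$, since $F_nZ_n\le Z_n$) occur infinitely often on $\{Z=U\}$. The second event is $H_B=\{Z=U,\,Z_n\ge U\text{ eventually}\}$: here no further black balls are added after some time, so the black-ball count $J_n$ is eventually constant, and $S_n/n\to m>0$ then forces $Z_n=J_n/S_n\to 0$, contradicting $Z=U>0$. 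Hence $P(H_B)=0$.

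The delicate remaining event is $H_C=\{Z=U\}\setminus(H_A\cup H_B)$, on which $Z_n>U$ and $Z_n\le U$ both occur infinitely often. Let $\tau_k$ denote the $k$-th upcrossing time of $U$, so that $Z_{\tau_k-1}<U\le Z_{\tau_k}$ and necessarily $X_{\tau_k}=1$; a direct computation using $B_{\tau_k}\le c$ and $S_n/n\to m$ gives $Z_{\tau_k}-U\le c(1-U)/S_{\tau_k-1}=O(1/\tau_k)$. Throughout the $k$-th excursion above $U$ the black-ball count is frozen and each red draw strictly decreases $Z$, so $Z_n$ is non-increasing during the excursion and $Z_n-U\le Z_{\tau_k}-U=O(1/\tau_k)$ throughout. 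Since the per-step probability of drawing red stays $\ge 1-U-o(1)$ and only a bounded number of red additions suffices to push $Z$ back below $U$ (the starting gap $Z_{\tau_k}-U$ being $O(1/\tau_k)$), the excursion length $L_k$ will have conditionally exponential tails, and a conditional Borel--Cantelli argument then gives $L_k=O((\log\tau_k)^2)$ a.s.\ eventually, whence $L_k/\tau_k\to 0$. Consequently $\sqrt{n}(Z_n-U)^+\to 0$ a.s.\ on $H_C$, so $\limsup_n\sqrt{n}(F_nZ_n-U)\le 0$ on $H_C$, and the stable-convergence argument of the first step now yields $P(H_C)=0$. The main obstacle will be this excursion-length estimate: the CLT naturally controls $F_nZ_n-U$, and in order to turn it into information about $Z_n-U$ one must quantitatively control the upcrossings of $U$, both in size and in duration.
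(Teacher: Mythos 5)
Your strategy is genuinely different from the paper's and, in outline, workable; the treatment of $H_A$ (stable convergence from Lemma \ref{m6c8} against dominated convergence) and of $H_B$ (frozen black-ball count versus $S_n/n\to m$) is correct. The paper instead disposes of the whole lemma in a few lines by a change-of-measure trick: it conditions on $H=\{Z=L>0\}$ (the case $Z=U$ being symmetric) and invokes Lemma \ref{k9cxs34}, which says that almost sure conditional convergence survives the passage to $Q=P(\cdot\mid H)$. Under $Q$ the limit $Z$ coincides with the $\mathcal{G}_0$-measurable barrier, so $Q(D_n\le 0\mid\mathcal{G}_n)$ is just the indicator $I_{\{Z_n\le L\}}$; combining this with the one-sided inequality $D_n\le\sqrt{n}\,(Z_nF_n-Z)$, valid when $F_n\ge 1$, and with $Q(\sqrt{n}\,(Z_nF_n-Z)\le 0\mid\mathcal{G}_n)\to 1/2$ forces $Z_n\le L$ eventually on $H$, whence $Z_n\to 1$, a contradiction. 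This avoids any pathwise analysis of excursions, which is precisely where all the difficulty of your route is concentrated.

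The genuine gap in your write-up is the step you yourself flag: the excursion-length estimate on $H_C$ is asserted, not proved. Note that ``each red draw strictly decreases $Z$'' is false when $B_n=0$ (only $E(B_n)\to m>0$ is assumed), so the claimed conditionally exponential tails would require a quantitative lower-deviation bound for $\sum(1-X_i)B_i$ over the excursion, a localization on $\{1-U\ge\delta\}$ (since $1-U$ is only a.s.\ positive, not bounded away from $0$), and control of the growth of $S_n$ during the excursion; none of this is carried out. Fortunately the estimate can be bypassed entirely. During the $k$-th excursion the black-ball count is frozen at $J_{\tau_k}$, so $U\le Z_n=J_{\tau_k}/S_n$ gives $S_n\le S_{\tau_k}/U$, hence $S_{\tau_k-1}\ge U\,S_n-c$. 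Since $S_n/n\to m$ and $U=Z>0$ on the event, your own bounds then give
\begin{gather*}
0\le Z_n-U\le Z_{\tau_k}-Z_{\tau_k-1}\le\frac{c}{S_{\tau_k-1}}\le\frac{c}{U\,S_n-c}=O(1/n)
\end{gather*}
uniformly over excursions, so $\sqrt{n}\,(Z_n-U)^+\to 0$ on $\{Z=U\}\setminus H_B$ with no Borel--Cantelli argument at all. With that substitution your proof closes.
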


\begin{proof}
We just prove $P(Z=L)=0$. The proof of $P(Z=U)=0$ is the same. Since $P(Z=L=0)\le P(Z=0)=0$, it suffices to show that $P(Z=L>0)=0$. Let $H=\{Z=L>0\}$. Toward a contradiction, suppose $P(H)>0$ and define $Q(\cdot)=P(\cdot\mid H)$. Since $\sqrt{n}\,(Z_n-L)$ is $\mathcal{G}_n$-measurable and
\begin{gather*}
D_n=\sqrt{n}\,(Z_n-Z)\le\sqrt{n}\,(Z_nF_n-Z)\quad\text{if }F_n\ge 1,
\end{gather*}
then
\begin{gather*}
I_{\{\sqrt{n}\,(Z_n-L)\le 0\}}=Q\Bigl(D_n\le 0\mid\mathcal{G}_n\Bigr)\ge Q\Bigl(F_n\ge 1,\,\sqrt{n}\,(Z_nF_n-Z)\le 0\mid\mathcal{G}_n\Bigr)
\\\ge Q\Bigl(\sqrt{n}\,(Z_nF_n-Z)\le 0\mid\mathcal{G}_n\Bigr)-Q(F_n<1\mid\mathcal{G}_n)\quad\text{a.s.}
\end{gather*}
Since $Z_n<U$ eventually on $H$, then $F_n\ge 1$ eventually on $H$. Hence, the martingale convergence theorem in \cite{BD} yields $Q(F_n<1\mid\mathcal{G}_n)\overset{Q-a.s.}\longrightarrow 0$. By Lemma \ref{k9cxs34} and $\sigma^2>0$ a.s., it follows that
\begin{gather*}
Q\Bigl(\sqrt{n}\,(Z_nF_n-Z)\le 0\mid\mathcal{G}_n\Bigr)\overset{Q-a.s.}\longrightarrow\mathcal{N}(0,\,\sigma^2)\bigl((-\infty,0]\bigr)=1/2.
\end{gather*}
Thus, $I_{\{\sqrt{n}\,(Z_n-L)\le 0\}}\overset{Q-a.s.}\longrightarrow 1$, namely, $Z_n\le L$ eventually on $H$, which implies the contradiction $Z_n\overset{a.s.}\longrightarrow 1$ on $H$. Thus, $P(H)=0$.
\end{proof}

It remains only to show that $Z$ has non-atomic distribution. This follows from the same argument of Lemma \ref{j6re4}. Suppose in fact $P(Z=z)>0$ for some $z\in (0,1)$ and define $Q(\cdot)=P(\cdot\mid Z=z)$. Then, on the complement of a $Q$-null set, one obtains the contradiction $\sigma^2=q\,z\,(1-z)/m^2>0$ and
\begin{gather*}
\delta_{\sqrt{n}\,(Z_n-z)}(\cdot)=Q\Bigl(D_n\in\cdot\mid\mathcal{G}_n\Bigr)\overset{weakly}\longrightarrow\mathcal{N}(0,\,\sigma^2).
\end{gather*}
This concludes the proof of Theorem \ref{t2}.

\section{Concluding remarks}\label{g6r4e}
Even if conceptually simple, our proofs of Theorems 1-2 are quite long and technical. This is the main reason for assuming $R_n=B_n$. However, in case $L=0$ and $U=1$, such assumption may be weakened into
\begin{gather*}
E(R_n)=E(B_n)\quad\text{for all }n\ge 1;
\end{gather*}
see \cite[Corollary 3]{BCPR}. Also, up to minor complications, various points in the proofs of Theorems 1-2 seem to run under the only assumption that $E(R_n)=E(B_n)$. Thus, we conjecture that Theorems 1-2 are still valid if $R_n=B_n$ is replaced by $E(R_n)=E(B_n)$.

Let
\begin{gather*}
\overline{X}_n=(1/n)\sum_{i=1}^nX_i\quad\text{and}\quad C_n=\sqrt{n}\,(\overline{X}_n-Z_n).
\end{gather*}
If $L=0$ and $U=1$, as shown in \cite[Corollary 3]{BCPR}, one obtains
\begin{gather*}
C_n\longrightarrow\mathcal{N}\bigl(0,\,\sigma^2-Z(1-Z)\bigr)\quad\text{stably.}
\end{gather*}
Thus, one could investigate the asymptotic behavior of $C_n$, or even of the pair $(C_n,D_n)$, in the general case $0\le L<U\le 1$.
Again, this could be (tentatively) performed assuming $E(R_n)=E(B_n)$ instead of $R_n=B_n$.

Another (obvious) improvement is considering multicolor urns rather than 2-colors urns. Indeed, most additional problems arising in the multicolor case are of the notational type.

A last (but minor) development could be generalizing the framework in Subsection \ref{fr6w}. For instance, $B_n\vee R_n\le c$ could be replaced by a suitable moment condition. Or else, $(B_n,R_n)$ independent of $\sigma\bigl(\mathcal{G}_{n-1},\,X_n)$ could be replaced by $(B_n,R_n)$ conditionally independent of $X_n$ given $\mathcal{G}_{n-1}$.

\end{document}